\newtheorem{theorem}{Theorem}[section]
\newtheorem{lemma}[theorem]{Lemma}
\newtheorem{definition}[theorem]{Definition}
\newtheorem{remark}[theorem]{Remark}
\newtheorem{prop}[theorem]{Proposition}
\newtheorem{corollary}[theorem]{Corollary}
\numberwithin{equation}{section}
\newcommand{\cS}{{\mathcal S}}
\begin{document}
	
	\title{Topologizability and related properties of the iterates of composition operators in Gelfand-Shilov classes}
	
	\author{Angela\,A. Albanese, H\'ector Ariza}

\thanks{\textit{Mathematics Subject Classification 2020:}
 46F05,47B33.}
\keywords{Gelfand-Shilov classes, Global classes of ultradifferentiable functions,  Composition operators, Topologizable operators, $m-$To\-pologizable operators}

\address{ Angela A. Albanese\\
	Dipartimento di Matematica e Fisica
	``E. De Giorgi''\\
	Universit\`a del Salento- C.P.193\\
	I-73100 Lecce, Italy}
\email{angela.albanese@unisalento.it}

\address{H\'ector Ariza\\
Escuela T\'ecnica Superior de Arquitectura\\
 Universitat Polit\'ecnica de Val\'encia\\ 46022 Valencia\\ Spain}
\email{harirem@upvnet.upv.es}

\begin{abstract}
	We analyse the behaviour of the iterates of composition operators defined by polynomials acting on global classes of ultradifferentiable functions of Beurling type which are invariant under the Fourier transform. In particular, we determine the polynomials $\psi$ for which the sequence of iterates of the composition operator $C_\psi$ is topologizable ($m$-topologizable) acting on certain Gelfand-Shilov spaces defined by mean of Braun-Meise-Taylor weights. We prove that the composition operators $C_\psi$ with $\psi$  a polynomial of degree greater than one are always topologizable in certain settings involving Gelfand-Shilov spaces, just like in the Schwartz space. Unlike in the Schwartz space setting, composition operators $C_\psi$ associated with polynomials $\psi$ are not always $m-$topologizable. We also deal with the composition operators $C_\psi$ with $\psi$ being an affine function acting on $\cS_{\omega}(\mathbb{R})$ and find a complete characterization of topologizability and $m-$topologizability.
	\end{abstract}

\maketitle

\markboth{A.\,A. Albanese, H. Ariza}%
{\MakeUppercase{Topologizability and related properties }}

\section{Introduction}

The operators on topological vector spaces of  functions have been extensively studied for the last several decades, given their fundamental role in various branches of mathematical analysis.  In the theory of functional spaces, the most extensively studied operators are the multiplication operators and the composition operators.

The composition operators have been investigated on spaces of (ultra)differentia\-ble functions defined on an open set $\Omega \subseteq \mathbb{R}^d$. These investigations have considered various directions, with a main question being the characterization of conditions under which such operators are well-defined and continuous. It is a well-known fact that for the space $C^\infty(\Omega)$, the composition operator is both well-defined and continuous. Regarding the Schwartz space $\mathcal{S}(\mathbb{R})$ of rapidly decreasing smooth functions, the conditions for which the composition operator is well-defined were characterized in \cite{galbis}. Further properties of these operators acting in $\mathcal{S}(\mathbb{R})$ have also been investigated in \cite{akj,fgj,advances}.

The study of composition operators in Gelfand-Shilov spaces ${\mathcal S}_\omega({\mathbb R})$ has begun  in \cite{jmaa, RACSAM}.
The aim of this paper is to continue the research initiated in \cite{jmaa, RACSAM}.
The Gelfand-Shilov spaces ${\mathcal S}_\omega({\mathbb R})$ here considered are defined by mean of
Braun-Meise-Taylor weights, as presented in \cite{bmt}. These spaces, ${\mathcal S}_\omega({\mathbb R})$, are versions of the Schwartz space in an ultradifferentiable setting, retaining invariance under the Fourier transform. First introduced in \cite{bjorck}, the most relevant examples correspond to the classical Gelfand-Shilov spaces $\Sigma_s({\mathbb R}).$
The study of these classes and of operators acting on them is a very   active research area (see, for instance, \cite{am,ajm,asensio_jornet,capiello,cprt,debrouwere} and the references therein).

The Gelfand-Shilov spaces are not invariant under composition with polynomials of degree greater than one as noted in \cite[Theorem 3.9]{jmaa}. However, for every sub-additive weight function $\omega$, a new weight $\sigma$
can be identified such that $f\circ \psi\in {\mathcal S}_\sigma({\mathbb R})$ 
whenever $f\in {\mathcal S}_\omega({\mathbb R})$ and $\psi$
is a polynomial (see \cite[Theorem 4.4]{jmaa} for the non-trivial case where $\psi$ has a degree greater than one; otherwise, $\sigma=\omega$ is a valid choice). Precisely, for any  sub-additive weight $\omega$  and $\sigma:=\omega(\bullet^{\frac{1}{2}})$, we have $f\circ \psi\in {\mathcal S}_\sigma({\mathbb R})$ 
for every $f\in {\mathcal S}_\omega({\mathbb R})$,  with $\psi$  a fixed
polynomial. In such a case, the composition operator $C_\psi: \mathcal{S}_{\omega}(\mathbb{R})\to \mathcal{S}_{\sigma}(\mathbb{R})$ is clearly well-defined and  continuous. Given that the choice of 
$\sigma$ is independent of the polynomial 
$\psi$, it is then interesting to examine further the dynamics of the operator $C_\psi\colon {\mathcal S}_\omega({\mathbb R})\to {\mathcal S}_\sigma({\mathbb R})$, $f\mapsto f\circ \psi$. In \cite{jmaa} it was also showed
that the weight $\sigma=\omega(\bullet^{\frac{1}{2}})$ is optimal for studying iterations and dynamics of composition
operators associated with polynomials of degree greater than one (see \cite[Corollary 3.8]{jmaa}). In \cite{RACSAM} the authors characterized the polynomials $\psi$ of degree greater than one for which $C_\psi:\mathcal{S}_{\omega}(\mathbb{R})\to \mathcal{S}_{\sigma}(\mathbb{R})$ is equicontinuous, where $\omega$ is a sub-additive weight and $\sigma=\omega(\bullet^{\frac{1}{a}})$, $a>2$. In particular, they proved that if $\psi$ is a polynomial of degree greater than or equal to two, then the equicontinuity of the sequence of the iterates of the composition operator $C_\psi:\mathcal{S}_{\omega}(\mathbb{R})\to \mathcal{S}_{\sigma}(\mathbb{R})$ is equivalent to the lack of fixed points of the polynomial $\psi$.

The paper is organized as follows.
In Sections \ref{sec2} and \ref{sec3} we  study what happens with the intermediate properties of topologizability and $m-$topologizability of iterates of composition operators $C_\psi: \mathcal{S}_{\omega}(\mathbb{R})\to \mathcal{S}_{\sigma}(\mathbb{R})$, where $\psi$ is a polynomial. In Section 2 the case of polynomials of degree greater than one and $\sigma=\omega(\bullet^{\frac{1}{a}})$, with $a>2$, is analyzed. In Section \ref{sec3}
the case of polynomial $\psi$ of degre one 
and $a=1$ (i.e. $\sigma=\omega$) is studied.

From now on, $\psi_m = \psi\circ\ldots\circ\psi$ denotes the $m$-th iteration of $\psi.$

\medskip
\subsection{Gelfand-Shilov function spaces}\label{sec1.1}
We begin by describing the Gelfand-Shilov function spaces defined by mean  of non-quasianalytic weight functions $\omega$ in the sense of Braun, Meise and Taylor \cite{bmt}.

\begin{definition}\label{def:weight} A continuous increasing function $\omega :[0,\infty [\longrightarrow [0,\infty [$ is called a {\it non quasi-analytic weight} if it satisfies:
	\begin{itemize}
		\item[$(\alpha)$] there exists $L\geq 1$ such that $\omega (2t) \leq L(\omega (t)+1)$ for
		all $t\geq 0$,
		\item[$(\beta)$] $\displaystyle\int_{0}^{\infty}\frac{\omega (t)}{1+t^{2}}\ dt < \infty $,
		\item[$(\gamma)$] $\log(1+t^{2})=o(\omega (t))$ as t tends to $\infty$,
		\item[$(\delta)$] $\varphi_\omega :t\rightarrow \omega (e^{t})$ is convex.
	\end{itemize}
\end{definition}

Observe that condition $(\alpha)$ implies that $\omega(t_1+t_2)\leq L (\omega(t_1)+\omega(t_2)+1)$, for every $t_1, t_2\geq 0$ (see, for example \cite[Remark 2.2(1)]{paley}). If the weight $\omega$ satisfies the strongest  condition $\omega(t_1+t_2)\leq \omega(t_1) + \omega(t_2)$, for every $t_1, t_2\geq 0$, then $\omega$ is called a sub-additive weight.
In the main results of the paper,  we assume the weights to be sub-additive
or equivalent to a sub-additive one. The main examples are $\omega(t) = t^{\frac{1}{d}}$, $d>1,$ or $\omega(t) = \max\left(0, \log^p t\right)$, $p > 1.$ We recall that two weights $\omega$ and $\sigma$ are said to be equivalent if there exist $A, B > 0$ such that $\omega(t) \leq A\left(1 + \sigma(t)\right)$ and $\sigma(t) \leq B\left(1 + \omega(t)\right)$ for all $t\geq 0.$ 
\par\medskip
The {\it Young conjugate} $\varphi_\omega ^{*}:[0,\infty [ \longrightarrow {\mathbb R}$  of
$\varphi_\omega$ is defined by
$$
\varphi_\omega^{*}(s):=\sup\{ st-\varphi_\omega (t):\ t\geq 0\},\ s\geq 0.
$$ Then $\varphi_\omega^\ast$ is convex, $\varphi_\omega^\ast(s)/s$ is increasing and $\displaystyle\lim_{s\to \infty}\frac{\varphi_\omega^\ast(s)}{s} = +\infty.$ 

A weight function  $\omega$ is said to be a {\it strong weight} if
\begin{itemize}
	\item[$(\varepsilon)$]
	there exists a constant $C \geq 1$ such that for all $y > 0$ the following inequality holds
	\begin{equation}
		\int_1^\infty \frac{\omega(yt)}{t^2}\ dt \leq C\omega(y) + C.
	\end{equation}
\end{itemize}

\begin{definition} Let $\omega$ be a weight function. The Gelfand-Shilov space of Beurling type $\mathcal{S}_{\omega}(\mathbb{R})$ consists of those functions $f\in C^\infty({\mathbb R})$ with the property that
	$$
	p_{\omega, \lambda}(f):= \sup_{x\in {\mathbb R}}\sup_{n,q\in {\mathbb N}_0}\left(1+|x|\right)^q|f^{(n)}(x)|\exp\left(-\lambda\varphi_\omega^\ast\left(\frac{n+q}{\lambda}\right)\right) < \infty
	$$ 
	for every $\lambda > 0.$
\end{definition}
\par\medskip 

Condition $(\beta)$ guarantees the existence of non-trivial compactly supported functions $f\in \mathcal{S}_{\omega}(\mathbb{R}),$ which plays an important role in proving the main results. Moreover,  $\mathcal{S}_{\omega}(\mathbb{R})$ is a nuclear Fr\'echet space (see \cite{nuclear}). Several equivalent systems of semi-norms describing the topology of $\mathcal{S}_{\omega}(\mathbb{R})$ can be found in \cite{asensio_jornet,seminormas}. For example, 
\[
q_{\omega,\lambda,\mu}(f):=\sup_{x\in {\mathbb R}}\sup_{n\in {\mathbb N}_0}|f^{(n)}(x)|\exp\left(-\lambda\varphi_\omega^\ast\left(\frac{n}{\lambda}\right)\right) \exp\mu\omega(x), 
\]
for $\lambda, \mu > 0$, 
and
\[
q_{\omega,\lambda}(f):=\sup_{x\in {\mathbb R}}\sup_{n\in {\mathbb N}_0}|f^{(n)}(x)|\exp\left(-\lambda\varphi_\omega^\ast\left(\frac{n}{\lambda}\right)\right) \exp\lambda\omega(x), 
\]
for $\lambda>0$, are  equivalent systems of semi-norms on $\mathcal{S}_{\omega}(\mathbb{R})$.\\

We also note that by \cite{mt} condition ($\varepsilon$) is equivalent to the surjectivity of the Borel map
$$
B\colon {\mathcal S}_\omega({\mathbb R})\to {\mathcal S}_\omega(\{0\}),\quad f\mapsto (f^{(j)}(0))_{j\in \mathbb{N}_0},
$$
where
$$
{\mathcal S}_\omega(\{0\}):=\left\{(x_j)_{j\in \mathbb{N}_0}\in \mathbb{C}^{\mathbb{N}_0}:\ \sup_{j\in\mathbb{N}_0}|x_j|\exp\left(-k\varphi^*_\omega\left(\frac{j}{k}\right)\right)<\infty\ \forall k\in\mathbb{N}\right\}.
$$

The Gelfand-Shilov spaces can be  also defined by mean of a weight sequence. 

\begin{definition}\label{weight-sequence} A
	sequence $M=(M_p)_{p\in\mathbb{N}_0}$ is said to be  a weight sequence if it satisfies
	\begin{itemize}
		\item[$(M_0)$] There exists $c >0$ such that $(c(p +1))^p\leq M_p$, for $p\in\mathbb{N}_0$.
		\item[$(M_1)$] $M_{2p}\leq M_{p-1}M_{p+1}$, for  $p\in\mathbb{N}$ and $M_0=1$.
		\item[$(M_2)$] There are $A, H>0$ such that $M_p\leq AH^p\min_{0\leq q\leq p}M_qM_{p-q}$, for $p \in\mathbb{N}_0$.
		\item[$(M_3)'$] $\sup_{p\in\mathbb{N}}\frac{m_p}{p}\sum_{j\geq p}\frac{1}{m_j}<\infty$, where $m_p:=\frac{M_p}{M_{p-1}}$, for $p\in\mathbb{N}$.
	\end{itemize}
\end{definition}

\begin{definition}\label{GS-sequence} Let $M=(M_p)_{p\in\mathbb{N}_0}$ be a weight sequence. The Gelfand-Shilov space of Beurling type $\mathcal{S}_M({\mathbb{R}})$ consists of those functions $f\in C^\infty(\mathbb{R})$ with the property that 
	\[
	p_{M,h}(f):=\sup_{x\in\mathbb{R}}\sup_{n,q\in\mathbb{N}_0}\frac{|x|^q|f^{(n)}(x)|}{h^{n+q}M_{n+q}}<\infty,
	\]
	for all $h>0$.
\end{definition}

Note that condition $(M_3)'$ implies condition $(M_3)$ of non quasi-analiticity, i.e.,  $\sum_{p=1}^\infty\frac{M_{p-1}}{M_p}<\infty$. Thus, $\mathcal{S}_M({\mathbb{R}})$   contains non trivial compactly supported functions. Moreover, 
$\mathcal{S}_M({\mathbb{R}})$ is a Fr\'echet space when it is equipped with the topology generated by the system $\{p_{M,h}\}_{h>0}$ of norms. \\

The most relevant example is the classical Gelfand-Shilov space of index $s>1,$ $\Sigma_s(\mathbb{R})$ defined as the space of all functions $f\in C^\infty(\mathbb{R})$ such that
\[
p_\mu(f):=\sup_{j,q\in\mathbb{N}_0}\sup_{x\in \mathbb{R}}|x|^q|f^{(j)}(x)|\frac{\mu^{j+q}}{j!^sq!^s}<\infty,
\]
for all $\mu>0$.\\

Recall that if the weight function $\omega$ satisfies the condition
\begin{equation}\label{C.Sequence}
	(\zeta)\quad \exists H\geq 1:\ 2\omega(t)\leq \omega(Ht)+H, \ \forall t\geq 0,
\end{equation}
then there exists a weight sequence $M:=\{M_p\}_{p\in\mathbb{N}_0}$ such that $\mathcal{S}_M({\mathbb{R}})=\mathcal{S}_\omega(\mathbb{R})$ algebraically and topologically, see \cite[Corollary 16]{BMM} and \cite[Section 3]{BJON}. We refer to \cite{jmaa} to find more on the connection with Gelfand-Shilov spaces defined in terms of a weight sequence instead of a weight function.\\

Note that $\omega(t) = t^{\frac{1}{s}}$ is a strong weight satisfying condition $(\zeta)$ for all $s>1$ and whose corresponding weight sequence is $M=(p!^s)_{p\in\mathbb{N}_0}$ i.e. $\Sigma_s({\mathbb R}) = {\mathcal S}_\omega({\mathbb R}).$ On the other hand, the function $\omega(t)=\max\{0,\log^p(t)\}$ is an example of a strong weight not satisfying condition $(\zeta)$ for all $p>1$, \cite[20 Example]{BMM}.

\subsection{Topologizabilty and $m$-topologizability}
The aim of this paper is to study the topologizability (and $m-$topologi\-zability) of the composition operators $C_\psi$ acting on Gelfand-Shilov classes, when $\psi$ is a polynomial.
For the sake of the reader, let us recall some definitions. Let $E$ be a locally convex Hausdorff space.  We denote by $cs(E)$ a system of continuous seminorms determining the topology of $E$. Given locally convex Hausdorff spaces $E$, $F$ we denote by $L(E,F)$ the space of all continuous linear operators from $E$ to $F$. If $E=F$, then we simply write $L(E)$ for $L(E, E)$.\\

The concept of topologizability and $m-$topologizability was defined originally and studied by Zelazko in  \cite{z}.
\begin{definition}
	An operator $T \in L(E)$ on a locally convex Hausdorff space $E$ is called
	topologizable if for  $p\in cs(E)$ there is  $q \in cs(E)$ such that for every $k\in\mathbb{N}$ there is $M_k>0$ such that
	\begin{equation*}
		p(T^k(x)) \leq M_k \hspace{0.03cm} q(x)
	\end{equation*} for each $x \in E$.
\end{definition}

\begin{definition}
	An operator $T \in L(E)$ on a locally convex Hausdorff  space $E$ is called $m-$topologizable if for  $p\in cs(E)$ there are  $q \in cs(E)$ and $C\geq 1$ such that 
	\begin{equation*}
		p(T^k(x)) \leq C^k \hspace{0.03cm} q(x)
	\end{equation*}
	for each $k\in\mathbb{N}$ and for each $x\in E$.
\end{definition}
Observe that in the definitions above it is essential that the seminorm $q$ only depends on the seminorm $p$ and not on the iteration $k$. Clearly, we have that power boundedness of an operator $T\in L(E)$ on a locally convex  Hausdorff space $E$ (i.e., equicontinuity of the sequence $(T^k)_{k\in{\mathbb N}}$) implies $m-$topologizability, which in turn implies topologizability.\\

As it was done with the concept of power boundedness and mean ergodicity in \cite{RACSAM}, we extend the concepts of  topologizability and of $m-$topolo\-gizability to an arbitrary family of operators $\{T_m:E\to F: m\in\mathbb{N}\},$ where $E$ and $F$ are locally convex Hausdorff spaces: 

\begin{definition} Let $E$, $F$ be locally convex Hausdorff spaces.
	A family of operators $\{T_m:E\to F: m\in\mathbb{N}\}$ is called
	topologizable if for every $p\in cs(F)$ there is  $q \in cs(E)$ such that for every $k\in\mathbb{N}$ there is $M_k>0$ such that \begin{equation*}
		p(T_k(x)) \leq M_k \hspace{0.03cm} q(x)
	\end{equation*} for each $x \in E$.
\end{definition}

\begin{definition} Let $E$, $F$ be locally convex Hausdorff spaces.
	A family of operators $\{T_m:E\to F: m\in\mathbb{N}\}$ is called $m-$topologizable if for every  $p\in cs(F)$ there are $q \in cs(E)$ and $C\geq 1$ such that
	\begin{equation*}
		p(T_k(x)) \leq C^k \hspace{0.03cm} q(x)
	\end{equation*} for each $k\in\mathbb{N}$ and for each $x\in E$. 
\end{definition}

In the above definitions, it is essential yet again that the  semi-norm $q$ only depends on the semi-norm $p$ and not on the iteration $k$; otherwise, all families of continuous linear operators $\{T_m:E\to F: m\in\mathbb{N}\}$ would fulfill them automatically. 
Also in this case, equicontinuity of the family of operators $\{T_m:E\to F: m\in\mathbb{N}\}$ implies $m-$topologizability,
which in turn implies topologizability.

\section{Topologizability (and $m-$topologizability) of the composition operators associated to a polynomial $\psi$ of degree greater than one on Gelfand-Shilov classes.} \label{sec2}

In this section we prove that even when the polynomial $\psi$ has fixed points we still have that the family of iterates $\{C_{\psi_m}:\mathcal{S}_{\omega}(\mathbb{R})\to \mathcal{S}_{\sigma}(\mathbb{R}):m\in\mathbb{N}\}$ is topologizable in the same setting, i.e. where $\sigma=\omega(\bullet^{\frac{1}{a}})$, with $a>2$:

\begin{theorem}\label{toppolynomialsfirst}
	Let $\omega$ be a sub-additive weight function and $\psi$ be a polynomial of degree strictly greater than one. Then the family of iterates $\{C_{\psi_m}:\mathcal{S}_{\omega}(\mathbb{R})\to \mathcal{S}_{\sigma}(\mathbb{R}):m\in\mathbb{N}\}$ is topologizable, whenever $\sigma=\omega(\bullet^{\frac{1}{a}})$ and $a>2$.
\end{theorem}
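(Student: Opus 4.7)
The plan is to derive the estimate via Fa\`a di Bruno's formula applied to $f\circ\psi_k$, exploiting both the polynomial structure of $\psi_k$ and the fact that, in contrast to equicontinuity, topologizability lets $M_k$ depend on $k$. In particular, the obstruction caused by fixed points of $\psi$ that drives the characterization of equicontinuity in \cite{RACSAM} disappears; the essential task is to identify a single seminorm on $\mathcal{S}_{\omega}(\mathbb{R})$ that dominates $p_{\sigma,\lambda}(C_{\psi_m}\cdot)$ uniformly in $m$.

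I would fix an arbitrary seminorm $p_{\sigma,\lambda}$ on $\mathcal{S}_{\sigma}(\mathbb{R})$ and search for $\Lambda=\Lambda(\lambda),\,\mu=\mu(\lambda)>0$, independent of $k$, such that $p_{\sigma,\lambda}(f\circ\psi_k)\leq M_k\,q_{\omega,\Lambda,\mu}(f)$ for every $k\in\mathbb{N}$ and $f\in\mathcal{S}_{\omega}(\mathbb{R})$. Setting $d=\deg\psi\geq 2$, Fa\`a di Bruno's formula gives
\[
(f\circ\psi_k)^{(n)}(x)=\sum_{j=1}^n f^{(j)}(\psi_k(x))\,B_{n,j}\bigl(\psi_k'(x),\ldots,\psi_k^{(n-j+1)}(x)\bigr).
\]
Because $\psi_k$ has degree $d^k$, each $\psi_k^{(i)}$ is a polynomial of degree at most $d^k-i$ with coefficients bounded by a constant $A_k$, so the partial Bell polynomial is bounded by $C(k,n,j)(1+|x|)^{jd^k-n}$ when $jd^k\geq n$ and vanishes otherwise. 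After the substitution $|f^{(j)}(\psi_k(x))|\leq q_{\omega,\Lambda,\mu}(f)\exp(\Lambda\varphi^*_\omega(j/\Lambda))\exp(-\mu\omega(\psi_k(x)))$, the proof reduces to showing that
\[
(1+|x|)^{q+jd^k-n}\exp\bigl(\Lambda\varphi^*_\omega(j/\Lambda)-\mu\omega(\psi_k(x))-\lambda\varphi^*_\sigma((n+q)/\lambda)\bigr)\leq M_k
\]
uniformly in $n,q,j,x$.

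The polynomial factor $(1+|x|)^{q+jd^k-n}$ is absorbed by splitting into $|x|\leq R_k$ (whose contribution goes into $M_k$) and $|x|>R_k$, where $|\psi_k(x)|\gtrsim|x|^{d^k}$; condition $(\gamma)$ on $\omega$ ensures $\log(1+|x|)=o(\omega(\psi_k(x)))$, so half of $\exp(-\mu\omega(\psi_k(x)))$ swallows any polynomial in $|x|$. The Young-conjugate trade-off rests on the identity $\varphi^*_\sigma(s)=\varphi^*_\omega(as)$, immediate from $\sigma=\omega(\bullet^{1/a})$, together with $j\leq n\leq n+q$ and the monotonicity of $s\mapsto\varphi^*_\omega(s)/s$. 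The main obstacle is that $\Lambda$ must be chosen before $k$: one needs genuine slack in the exponent comparison. The hypothesis $a>2$ (strictly beyond the borderline value $a=2$ of \cite[Theorem 4.4]{jmaa}) supplies precisely this slack, allowing $\Lambda\approx\lambda/a$ to satisfy $\Lambda\varphi^*_\omega(j/\Lambda)\leq\lambda\varphi^*_\sigma((n+q)/\lambda)+\log M_k$, and permitting the $k$-dependent quantities $A_k$, $C(k,n,j)$, the small-$|x|$ region, and the sub-additivity of $\omega$ (which controls $\omega(\psi_k(x))$ on compact sets) to be bundled into $M_k$, while $q_{\omega,\Lambda,\mu}$ remains the same for every iterate.
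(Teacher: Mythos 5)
Your overall strategy (Fa\`a di Bruno, transferring the polynomial growth in $x$ to the image point $\psi_k(x)$ via $|\psi_k(x)|\gtrsim |x|^{d^k}$, absorbing iterate-dependent constants into $M_k$, and using $a>2$ for slack, with the seminorm indices chosen independently of $k$) is the same skeleton as the paper's proof. But there is a genuine gap in how you treat the Bell polynomials. You bound $B_{n,j}\bigl(\psi_k'(x),\dots\bigr)\leq C(k,n,j)(1+|x|)^{jd^k-n}$ and then propose to bundle $C(k,n,j)$ into $M_k$. That is not possible: $M_k$ may depend on the iterate $k$ only, while $C(k,n,j)$ contains the Fa\`a di Bruno coefficients $\frac{n!}{k_1!\cdots k_{n}! (1!)^{k_1}\cdots}$, whose total mass over the partitions with $j$ blocks is the Stirling number $S(n,j)$, i.e.\ factorial-type growth in $n$ (up to roughly $n^n$ after summing over $j$), plus coefficient factors of $\psi_k^{(i)}$ of size $(d^k)^n$. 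The geometric factors are absorbable by the standard lemma at the cost of an arbitrarily small $\delta$-fraction of the weight, but the factorial-type mass is not: in your own accounting the absorption of $(1+|x|)^{q+jd^k-n}$ through $\exp(-\mu\omega(\psi_k(x)))$ already costs about $\exp\bigl(\mu\varphi_\omega^*\bigl(\frac{j+q/2}{\mu}\bigr)\bigr)$ and the derivative weight of $f$ costs $\exp\bigl(\Lambda\varphi_\omega^*\bigl(\frac{j}{\Lambda}\bigr)\bigr)$, so the budget $\lambda\varphi_\sigma^*\bigl(\frac{n+q}{\lambda}\bigr)=\lambda\varphi_\omega^*\bigl(\frac{a(n+q)}{\lambda}\bigr)$ leaves only a slack of order $(a-2-\delta)(n+q)$ in the argument of $\varphi_\omega^*$. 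Since $\varphi_\omega^*(s)$ may grow as slowly as $s\log s$ (e.g.\ Gevrey weights with $s$ close to $1$), a slack with factor $a-2-\delta$, which can be arbitrarily small, cannot dominate $\log S(n,j)\sim n\log n$; so for $2<a<3$ and such weights your reduction does not close.

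The paper avoids exactly this by keeping the factors $\prod_\ell(\ell!)^{-k_\ell}$ and invoking the estimate of \cite[Proposition~2.1]{fg} (this is precisely where the sub-additivity of $\omega$, which your sketch never uses although it is a hypothesis of the theorem, enters): it converts $\prod_\ell(\ell!)^{-k_\ell}$ into $B_{m,\mu}^k\,\exp\bigl(\mu\varphi_\omega^*\bigl(\frac{j-k}{\mu}\bigr)\bigr)/(j-k)!$, so that the factor $(j-k)!$ cancels the Fa\`a di Bruno coefficient down to $\binom{j}{k}\frac{k!}{k_1!\cdots k_j!}$, whose total contribution is only $4^j$, while the extra weight argument $j-k$ fits \emph{inside} the same ``factor $2$'' budget (total $k+q+j\leq 2(q+j)$, then $(2+\delta)(q+j)\leq a(q+j)$). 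A second, more minor point: your absorption of the unbounded power $(1+|x|)^{q+jd^k-n}$ cannot rest on condition $(\gamma)$ alone (``swallows any polynomial'' only holds for a fixed exponent); it must be quantified via $\sup_t t^N e^{-\mu\omega(t)}=\exp\bigl(\mu\varphi_\omega^*\bigl(\frac{N}{\mu}\bigr)\bigr)$ together with a uniform-in-$k$ comparison such as $1+|x|\leq C_0(1+|\psi_k(x)|)$, and this accounting is exactly what produces the second unit of weight and the borderline $a=2$; your sketch gestures at this but does not carry out the bookkeeping. To repair the proposal you would need to reinstate the $\ell!$-division in the Bell monomials and import the \cite{fg} estimate (or an equivalent), at which point you essentially recover the paper's proof.
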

\begin{proof}
	We recall that $\varphi^*_\sigma(x)=\varphi^*_\omega(ax)$ for all $x\geq 0$. Let $\rho_m$ be the degree of the polynomial $\psi_m$ and $I_j=\{(k_1,...,k_j)\in\mathbb{N}_0^j: k_1+2k_2+\ldots+j k_j=j\}$, for every $m,j\in\mathbb{N}$. The fact that $\psi$ is a polynomial of degree greater than one clearly implies that there are $\alpha\in ]1,2[$ and $b>1$ such that $|\psi(x)|\geq |x|^\alpha$ for all $|x|\geq b$. So, it easily follows that there is $C_0\geq 1$ such that $1+|x|\leq C_0 (1+|\psi_m(x)|)$ for all $x\in\mathbb{R}$ and $m\in\mathbb{N}$. Since $\psi_m^{(j)}=0$ for every $m\in\mathbb{N}$ and $j\in\mathbb{N}$ with $j>\rho_m$,
	we also have that for each $m\in\mathbb{N}$ there is $D_m>0$ such that \begin{equation*}
		|\psi_m^{(\ell)}(x)|\leq D_m \hspace{0.05cm} (1+|\psi_m(x)|)^{\delta_m}
	\end{equation*} 
	for all $x\in\mathbb{R}$, $\ell\in\mathbb{N}$, where $0<\delta_m=\frac{\rho_m-1}{\rho_m}<1$. Therefore, for every $j,m\in \mathbb{N}$ and $(k_1,\ldots,k_j)\in I_j$ we obtain that
	\begin{equation*}
		\prod_{\ell=1}^j \left|\frac{\psi_m^{(\ell)}(x)}{\ell!}\right|^{k_\ell}\leq D_m^k(1+|\psi_m(x)|)^{\delta_mk}\prod_{\ell=1}^j\frac{1}{(\ell!)^{k_\ell}}
	\end{equation*}
	for all $x\in \mathbb{R}$, where $k=k_1+k_2+\ldots k_j$, and hence,  for every $\mu>0$ and  $x\in \mathbb{R}$ that 
	\begin{equation*}
		\prod_{\ell=1}^j \left|\frac{\psi_m^{(\ell)}(x)}{\ell!}\right|^{k_\ell}\leq D_m^k(1+|\psi_m(x)|)^{k}\prod_{\ell=1}^j\frac{\exp\left(2k_\ell\mu \varphi^*_\omega(\frac{\ell}{2\mu})\right)}{(\ell!)^{k_\ell}}\cdot \frac{1}{\exp\left(2k_l\mu \varphi^*_\omega(\frac{\ell}{2\mu})\right)}.
	\end{equation*}
	Since $\omega$ is a sub-additive weight, we can now argue as in the proof of \cite[Proposition 2.1]{fg} to show 
	that for every $\mu>0$ and $m\in\mathbb{N}$ there is $B_{m,\mu}\geq 1$ such that 
	\begin{align*}
		\prod_{\ell=1}^j \left|\frac{\psi_m^{(\ell)}(x)}{\ell!}\right|^{k_\ell}&\leq B_{m,\mu}^k \frac{\exp\left(\mu \varphi^*_\omega(\frac{j-k}{\mu})\right)}{(j-k)!} (1+|\psi_m(x)|)^k \prod_{\ell=1}^j \frac{1}{\exp\left(2k_\ell\mu \varphi^*_\omega(\frac{\ell}{2k_\ell\mu})\right)}\\
		&\leq B_{m,\mu}^k \frac{\exp\left(\mu \varphi^*_\omega(\frac{j-k}{\mu})\right)}{(j-k)!} (1+|\psi_m(x)|)^k 
	\end{align*}
	for all $(k_1,...,k_j)\in I_j$, $j\in \mathbb{N}$ and $x\in\mathbb{R}$, where $k=k_1+...+k_j$. \\
	
	Fix $\lambda>0$ and $\delta>0$. Then there exist   $L_\lambda>0$ and  $\mu= \mu(\lambda)>0$ such that 
	\begin{equation*}
		4^j\hspace{0.05cm}  C_0^q\hspace{0.05cm}  \exp\left(\mu \varphi^*_\omega\left(\frac{(2+\delta)(q+j)}{\mu}\right)\right) \leq L_{\lambda} \exp\left(\lambda \varphi^*_\omega\left(\frac{(2+\delta)(q+j)}{\lambda}\right)\right)
	\end{equation*}
	for every $j,q\in \mathbb{N}$ (cf. \cite[Lemma 3.3]{jmaa} or \cite[Lemma A.1]{paley}). On the other hand, 
	for each $m\in\mathbb{N}$ there is $\square_{m,\mu,\delta}>0$ such that \begin{equation*}
		B_{m,\mu}^k\leq \square_{m,\mu,\delta} \exp\left(\mu \varphi^*_\omega\left(\frac{\delta k}{\mu}\right)\right)
	\end{equation*} 
	for all $k\in\mathbb{N}$ (cf. \cite[Lemma 3.3]{jmaa} or \cite[Lemma A.1]{paley}). \\
	
	By the  Fa\'a Di Bruno's formula and the fact that $$\sum_{(k_1,...,k_j)\in I_j} \frac{(k_1+\ldots+k_j)!}{k_1! \ldots k_j!}=2^{j-1}$$ for all $j\in\mathbb{N}$ (see, for instance, \cite{jmaa}  for a  proof of this identity), it follows that 
	\begin{align*}
		&|(1+|x|)^q (f\circ \psi_m)^{(j)}(x)|\\
		&\leq  
		C_0^q (1+|\psi_m(x)|)^q \sum_{(k_1,...,k_j)\in I_j} \frac{j!}{k_1! \ldots k_j!} |f^{(k)}(\psi_m(x))| \prod_{\ell=1}^j \left|\frac{\psi_m^{(\ell)}(x)}{\ell!}\right|^{k_\ell}\\
		&\leq  
		C_0^q \sum_{(k_1,...,k_j)\in I_j} \frac{j!}{k_1! \ldots k_j!} |f^{(k)}(\psi_m(x))| B_{m,\mu}^k \frac{\exp\left(\mu \varphi^*_\omega\left(\frac{j-k}{\mu}\right)\right)}{(j-k)!} (1+|\psi_m(x)|)^{k+q}   \\
		& \leq 2^j  C_0^q   p_{\omega,\mu}(f) \hspace{0.02cm} \sum_{(k_1,...,k_j)\in I_j} \frac{k!}{k_1! \ldots k_j!}  \exp\left(\mu \varphi^*_\omega\left(\frac{2k+q+j-k}{\mu}\right)\right) B_{m,\mu}^k \\
		& \leq \square_{m,\mu,\delta} 2^j  C_0^q p_{\omega,\mu}(f)    \sum_{(k_1,...,k_j)\in I_j} \frac{k!}{k_1! \ldots k_j!}  \exp\left(\mu \varphi^*_\omega\left(\frac{(1+\delta)k+q+j}{\mu}\right)\right)     \\ 
		&\leq \square_{m,\mu,\delta} \hspace{0.05cm} 4^j  C_0^q p_{\omega,\mu}(f)   \exp\left(\mu \varphi^*_\omega\left(\frac{(2+\delta)(q+j)}{\mu}\right)\right) \\ &\leq \square_{m,\mu,\delta} L_{\lambda} \exp\left(\lambda \varphi^*_\omega\left(\frac{(2+\delta)(q+j)}{\lambda}\right)\right) p_{\omega,\mu}(f)
\end{align*}
for all $j\in\mathbb{N}, q\in\mathbb{N}_0$, $x\in\mathbb{R}$, $m\in\mathbb{N}$ and $f\in \mathcal{S}_{\omega}(\mathbb{R})$. Since $\lambda>0$ and $\delta>0$ were arbitrary and $\varphi^*_\sigma(s)=\varphi^*_\omega(as)$ with $a>2$, for $s\geq 0$,  the proof is complete.
\end{proof}

As in \cite[Corollary 3.5]{RACSAM}, we can deduce the following immediate consequence in the case where the weight $\omega$ satisfies the additional rather technical condition \eqref{logcond}, which power of logarithms are particular cases of: 
\begin{corollary}
Let $\omega$ be a sub-additive weight such that the following condition is satisfied: 
\begin{equation}\label{logcond}    
	\exists \gamma > 1\ \exists C\geq 1\ \forall t\geq 0:\ \omega(t^\gamma)\leq C\omega(t) + C. 
\end{equation} 
If $\psi$ is a polynomial of degree strictly greater than one, then the family of iterates $\{C_{\psi_m}:\mathcal{S}_{\omega}(\mathbb{R})\to \mathcal{S}_{\omega}(\mathbb{R}):m\in\mathbb{N}\}$ is topologizable. 
\end{corollary}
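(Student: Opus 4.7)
The plan is to reduce the corollary to Theorem~\ref{toppolynomialsfirst} by showing that condition \eqref{logcond} forces the weights $\omega$ and $\sigma:=\omega(\bullet^{1/a})$ to be equivalent for \emph{some} $a>2$, so that the spaces $\mathcal{S}_\omega(\mathbb{R})$ and $\mathcal{S}_\sigma(\mathbb{R})$ coincide as Fr\'echet spaces.

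First, I would pick any $a>2$ and verify the equivalence of $\omega$ and $\sigma$. One direction is trivial: since $\omega$ is increasing and $t^{1/a}\leq t$ for $t\geq 1$, we have $\sigma(t)=\omega(t^{1/a})\leq \omega(t)$ on $[1,\infty)$, while on $[0,1]$ both functions are controlled by the constant $\omega(1)$. For the reverse inequality, I would iterate \eqref{logcond}. Substituting $t^{1/a}$ into \eqref{logcond} once gives
\[
\omega(t^{\gamma/a})\leq C\,\omega(t^{1/a})+C,
\]
and applying \eqref{logcond} again to the left-hand side yields $\omega(t^{\gamma^2/a})\leq C^2\,\omega(t^{1/a})+(C^2+C)$; after $n$ iterations one obtains $\omega(t^{\gamma^n/a})\leq C'_n\,(\omega(t^{1/a})+1)$ for an explicit constant $C'_n$. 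Choosing $n$ so large that $\gamma^n\geq a$ and using monotonicity of $\omega$ gives, for $t\geq 1$,
\[
\omega(t)\leq \omega(t^{\gamma^n/a})\leq C'_n\bigl(\sigma(t)+1\bigr),
\]
and on $[0,1]$ the inequality is trivial. Hence $\omega$ and $\sigma$ are equivalent.

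Next, it is a standard fact for weights in the Braun-Meise-Taylor sense that equivalent weights produce the same Gelfand-Shilov space with equivalent fundamental systems of seminorms (see the references on equivalent systems of seminorms cited in Section~\ref{sec1.1}). Therefore $\mathcal{S}_\sigma(\mathbb{R})=\mathcal{S}_\omega(\mathbb{R})$ algebraically and topologically.

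Finally, Theorem~\ref{toppolynomialsfirst} applied to this choice of $a$ gives that $\{C_{\psi_m}:\mathcal{S}_\omega(\mathbb{R})\to \mathcal{S}_\sigma(\mathbb{R}):m\in\mathbb{N}\}$ is topologizable. Transferring the estimates through the identification $\mathcal{S}_\sigma(\mathbb{R})=\mathcal{S}_\omega(\mathbb{R})$, with each seminorm of $\mathcal{S}_\omega(\mathbb{R})$ bounded by a seminorm of $\mathcal{S}_\sigma(\mathbb{R})$, immediately yields topologizability of $\{C_{\psi_m}:\mathcal{S}_\omega(\mathbb{R})\to \mathcal{S}_\omega(\mathbb{R}):m\in\mathbb{N}\}$. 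There is no real obstacle beyond the iteration argument for weight equivalence; the example $\omega(t)=\max\{0,\log^p t\}$ with $p>1$ satisfies \eqref{logcond} with $\gamma$ any number greater than $1$ (since $\log^p(t^\gamma)=\gamma^p\log^p t$), which is precisely why powers of logarithms are a particular case.
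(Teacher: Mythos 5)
Your argument is correct and is exactly the reduction the paper intends: the paper states the corollary as an immediate consequence of Theorem \ref{toppolynomialsfirst} (as in the analogous corollary of \cite{RACSAM}), the point being that iterating \eqref{logcond} shows $\omega$ and $\sigma=\omega(\bullet^{1/a})$ are equivalent for some (any) $a>2$, so $\mathcal{S}_\sigma(\mathbb{R})=\mathcal{S}_\omega(\mathbb{R})$ topologically and the topologizability transfers. Your iteration of \eqref{logcond} and the verification for the logarithmic example simply spell out what the paper leaves implicit.
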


In the following result we show that Theorem \ref{toppolynomialsfirst} cannot be improved to obtain the $m$-topologizability of the composition operator $C_\psi$ in the same setting, unlike what happens in the classical Schwartz space setting (see, for instance, \cite[Example 4.14.]{akj}). In particular, we establish that there is at least one polynomial $\psi$ of degree greater than one and a sub-additive weight $\omega$ for which $C_\psi: \mathcal{S}_{\omega}(\mathbb{R})\to \mathcal{S}_{\omega(\bullet^{\frac{1}{2}})}(\mathbb{R})$ is not $m-$topologizable.

\begin{prop}\label{squaremtop}
If $\psi(x)=x^2$ for all $x\in\mathbb{R}$ and $\omega(t)=|t|^{\frac{1}{s}}$, with $s>1$, then $\{C_{\psi_m}:\mathcal{S}_{\omega}(\mathbb{R})\to \mathcal{S}_{\omega(\bullet^{\frac{1}{2}})}(\mathbb{R}):m\in\mathbb{N}\}$ is not $m-$topologizable.
\end{prop}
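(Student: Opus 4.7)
The strategy is to exhibit a \emph{single} test function $f\in\mathcal{S}_\omega(\mathbb{R})$ and a \emph{single} target seminorm $p_{\sigma,\mu_0}$ such that $p_{\sigma,\mu_0}(f\circ\psi_m)$ grows faster than $C^m$ for every $C\ge 1$. Because any source seminorm $q$ on $\mathcal{S}_\omega(\mathbb{R})$ assigns $f$ a fixed finite value $q(f)$, no inequality of the form $p_{\sigma,\mu_0}(f\circ\psi_m)\le C^m q(f)$ can hold uniformly in $m$, ruling out $m$-topologizability.

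The function $f$ is tailored to the fixed point $x=1$ of $\psi(x)=x^2$. By property $(\beta)$ of $\omega$, there exists $\chi\in\mathcal{S}_\omega(\mathbb{R})$ with $\chi\equiv 1$ on some interval $(1-\delta,1+\delta)$; set $f(y):=(y-1)\chi(y)$, which lies in $\mathcal{S}_\omega(\mathbb{R})$ by the Leibniz rule. Since $\psi_m(1)=1$ and $\psi_m$ is continuous at $1$, there is an open neighborhood $U_m$ of $1$ (possibly shrinking as $m$ grows) on which $\psi_m(x)\in(1-\delta,1+\delta)$ and hence $(f\circ\psi_m)(x)=\psi_m(x)-1=x^{2^m}-1$. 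Since derivatives at a single point depend only on the germ, this yields the clean identity
\[
(f\circ\psi_m)^{(n)}(1)=\frac{(2^m)!}{(2^m-n)!}\qquad(1\le n\le 2^m).
\]

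Specializing the seminorm to the point $x=1$ with derivative order $n=m$ and polynomial exponent zero, and using the elementary lower bound $\tfrac{(2^m)!}{(2^m-m)!}\ge (2^{m-1})^m=2^{m(m-1)}$ valid for $m\ge 2$, one obtains
\[
p_{\sigma,\mu_0}(f\circ\psi_m)\;\ge\;2^{m(m-1)}\exp\!\bigl(-\mu_0\,\varphi_\sigma^*(m/\mu_0)\bigr).
\]
For the power weight $\sigma(t)=t^{1/(2s)}$ an explicit computation gives $\varphi_\sigma^*(x)=2sx(\log(2sx)-1)$, and Stirling's formula then bounds $\exp(\mu_0\varphi_\sigma^*(m/\mu_0))$ by $K^m m^{2sm}$ for some $K=K(\mu_0,s)>0$. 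Thus $\log p_{\sigma,\mu_0}(f\circ\psi_m)\ge m(m-1)\log 2-m\log K-2sm\log m$; divided by $m$ the right-hand side tends to $+\infty$, so $p_{\sigma,\mu_0}(f\circ\psi_m)^{1/m}\to\infty$, which finishes the argument. The only point requiring any care is the germ-at-$1$ observation, namely that although the neighborhood $U_m$ shrinks with $m$, the values of the derivatives at the single point $x=1$ are unaffected; it is precisely the identification with the monomial $x^{2^m}-1$ that produces the super-exponential factor $2^{m(m-1)}$ overwhelming any bound of type $C^m$.
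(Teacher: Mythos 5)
Your proof is correct and follows essentially the same route as the paper: both evaluate the $m$-th derivative of $f\circ\psi_m$ at the fixed point $x=1$, obtain the lower bound $2^m(2^m-1)\cdots(2^m-m+1)\gtrsim 2^{cm^2}$, and play it against $\exp\bigl(\mu\varphi_\sigma^*(m/\mu)\bigr)\approx m^{2sm}$ to defeat any bound of the form $C^m q(f)$. The only (cosmetic) difference is that you realize the required jet at $1$ by the explicit cutoff construction $f(y)=(y-1)\chi(y)$ and a germ identification with $x^{2^m}-1$, whereas the paper prescribes $f'(1)=1$, $f^{(h)}(1)=0$ for $h\ge 2$ via surjectivity of the Borel map (strong weight) and then applies Fa\'a di Bruno.
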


\begin{proof}
We denote $\sigma:=\omega(\bullet^{\frac{1}{2}})$. We observe that 
\begin{equation*}
	\exp\left(-\lambda \varphi^*_\sigma\left(\frac{m}{\lambda}\right)\right)=\left(\frac{\lambda e}{2s m}\right)^{2s m}
\end{equation*} 
for all $m\in\mathbb{N}$ and $\lambda>0$, as  an easy computation shows.
We also  note that $\psi_m(x)=x^{2^m}$, for all $x\in\mathbb{R}$ and $m\in\mathbb{N}$.\\

We  suppose that the family of iterates $\{C_{\psi_m}:\mathcal{S}_{\omega}(\mathbb{R})\to \mathcal{S}_{\sigma}(\mathbb{R}):m\in\mathbb{N}\}$ is $m-$topologizable, i.e. for every $\lambda>0$ there are $\mu>0$ and $C>0$ such that $p_{\sigma,\lambda}(C_{\psi_m}f)\leq C^mp_{\omega,\mu}(f)$ for all $m\in\mathbb{N}$ and  $f\in \mathcal{S}_{\omega}(\mathbb{R})$. Accordingly, 
\begin{equation}\label{redabssquare}
	\sup_{x\in\mathbb{R}, j, q\in\mathbb{N}_0} (1+|x|)^q  |(f\circ \psi_m)^{(j)}(x)|  \exp\left(-\lambda \varphi^*_\sigma\left(\frac{j+q}{\lambda}\right)\right) \leq C^m  p_{\omega,\mu}(f)
\end{equation} 
for all $m\in\mathbb{N}$ and  $f\in \mathcal{S}_{\omega}(\mathbb{R})$. Since $\omega$ is a strong weight, we can select $f\in \mathcal{S}_{\omega}(\mathbb{R})$ so that $f'(1)=1$ and $f^{(h)}(1)=0$ for all $h\geq 2$. 
By Fa\'a Di Bruno's formula, it follows that 
\begin{equation*}
	|(f\circ\psi_m)^{(j)}(1)|=2^m (2^m-1) \ldots (2^m-j+1)
\end{equation*} 
for all $m\in\mathbb{N}$, $j\leq 2^m$. Observe that  the following inequality is satisfied 
\begin{equation*}
	2^m (2^m-1) \ldots (2^m-m+1)\geq (2^m-m+1)^m \geq 2^{\frac{m^2}{2}}
\end{equation*} 
for all $m\in\mathbb{N}$ large enough.

If we put $q=0,x=1,j=m$ in \eqref{redabssquare} and use the  inequality above, we get the following estimate: \begin{equation*}
	p_{\omega,\mu}(f)\hspace{0.03cm} C^m \geq  [2^m(2^m-1) \ldots (2^m-m+1)]\exp\left(-\lambda \varphi^*_\sigma\left(\frac{m}{\lambda}\right)\right) \geq 2^{\frac{m^2}{2}} \left(\frac{\lambda e}{2s m}\right)^{2s m}
\end{equation*}
for all $m\in\mathbb{N}$ large enough.  This would imply that there is $Q=Q(f,\lambda, s)>0$ such that 
\begin{equation*}
	Q\geq \frac{2^{\frac{m}{2}}}{m^{2s}}
\end{equation*}
for all $m\in\mathbb{N}$, which is a contradiction with the obvious fact that
\begin{equation*}
	\frac{2^{\frac{m}{2}}}{m^{2s}}\to \infty \mbox{ as } m\to \infty.\qedhere
\end{equation*} 
\end{proof}

Notice that $\psi'(1)=2>1$ and $\psi(1)=1$, i.e. $1$ is a repelling fixed point of $\psi$. Let us show the following rather more general result: \begin{theorem}\label{repellingfixedpoint}
Let $\psi$ be a polynomial of degree greater than one that possesses at least one repelling fixed point, i.e. there is $x_0\in\mathbb{R}$ such that $\psi(x_0)=x_0$ and $|\psi'(x_0)|>1$. Then, the family of iterates $\{C_{\psi_m}:\mathcal{S}_{\omega}(\mathbb{R})\to \mathcal{S}_{\omega(\bullet^{\frac{1}{2}})}(\mathbb{R}):m\in\mathbb{N}\}$ is not $m-$topologizable, where $\omega=|\bullet|^{\frac{1}{d}}$, with $d>1$.
\end{theorem}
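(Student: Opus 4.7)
The plan is to follow the template of Proposition \ref{squaremtop}, but with a test function that depends on $m$. Set $\Lambda:=\psi'(x_0)$, so $|\Lambda|>1$. Since $\psi(x_0)=x_0$, the chain rule yields $\psi_m(x_0)=x_0$ and $\psi_m'(x_0)=\Lambda^m$ for every $m\in\mathbb{N}$. I argue by contradiction: if the family $\{C_{\psi_m}\}$ were $m$-topologizable, there would exist $\lambda_0>0$, $\mu>0$ and $C\geq 1$ with
\[
p_{\sigma,\lambda_0}(f\circ\psi_m)\leq C^m p_{\omega,\mu}(f),\qquad m\in\mathbb{N},\ f\in\mathcal{S}_\omega(\mathbb{R}).
\]

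The key idea is to prescribe, for each $m$, a function $f_m\in\mathcal{S}_\omega(\mathbb{R})$ with
\[
f_m^{(m)}(x_0)=1,\qquad f_m^{(j)}(x_0)=0\ \text{for every}\ j\neq m,
\]
together with explicit norm control. Such an $f_m$ exists because $\omega(t)=t^{1/d}$ is a strong weight, so condition $(\varepsilon)$ holds and, by \cite{mt}, the Borel map at $0$ is a continuous surjection between Fr\'echet spaces; transporting to $x_0$ via translation (which is continuous on $\mathcal{S}_\omega(\mathbb{R})$) and invoking the open mapping theorem, one obtains $k\in\mathbb{N}$ and $K>0$ depending only on $\mu$ such that $f_m$ can be chosen with
\[
p_{\omega,\mu}(f_m)\leq K\exp\bigl(-k\varphi_\omega^*(m/k)\bigr).
\]
Because $f_m^{(j)}(x_0)=0$ for $j\neq m$, Fa\`a Di Bruno's formula applied to $(f_m\circ\psi_m)^{(m)}(x_0)$ retains only terms with $k_1+\cdots+k_m=m$; combined with the partition constraint $k_1+2k_2+\cdots+mk_m=m$, this forces $(k_1,\ldots,k_m)=(m,0,\ldots,0)$, and the single surviving term is
\[
(f_m\circ\psi_m)^{(m)}(x_0)=f_m^{(m)}(x_0)\,(\psi_m'(x_0))^m=\Lambda^{m^2}.
\]

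Evaluating $p_{\sigma,\lambda_0}$ at $x=x_0$, $n=m$, $q=0$ gives $p_{\sigma,\lambda_0}(f_m\circ\psi_m)\geq|\Lambda|^{m^2}\exp(-\lambda_0\varphi_\sigma^*(m/\lambda_0))$. Combining with the norm bound on $f_m$ and the $m$-topologizability inequality produces
\[
|\Lambda|^{m^2}\leq C^m K\exp\bigl(\lambda_0\varphi_\sigma^*(m/\lambda_0)-k\varphi_\omega^*(m/k)\bigr).
\]
For $\omega(t)=t^{1/d}$ one has $\varphi_\omega^*(s)=ds\log(ds)-ds$ and $\varphi_\sigma^*(s)=\varphi_\omega^*(2s)$; a routine calculation shows the exponent on the right equals $dm\log m+O(m)$, so taking logarithms reduces the inequality to
\[
m^2\log|\Lambda|\leq m\log C+dm\log m+O(m),
\]
which is impossible for $m$ large since $|\Lambda|>1$ forces the left-hand side to grow quadratically. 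The step I expect to be the main obstacle is the norm estimate on $f_m$: Fa\`a Di Bruno cleanly isolates the factor $\Lambda^{m^2}$ and the final arithmetic with $\varphi_\omega^*$ is routine, but bringing the upper bound on $p_{\omega,\mu}(f_m)$ down to $K\exp(-k\varphi_\omega^*(m/k))$ requires both the strong-weight form of $\omega$ (to ensure surjectivity of the Borel map at a point) and the open mapping theorem (to supply a quasi-inverse with explicit seminorm estimates). This is also why the argument specialises to $\omega=|\bullet|^{1/d}$ rather than to a general sub-additive weight.
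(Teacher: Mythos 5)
Your argument is correct and follows the same skeleton as the paper's proof: evaluate at the repelling fixed point $x_0$, use for each $m$ a test function whose only nonvanishing derivative at $x_0$ is of order $m$, let Fa\`a di Bruno isolate the factor $(\psi_m'(x_0))^m=\Lambda^{m^2}$, and beat $C^m$ times the Gevrey Young-conjugate factors, which grow only like $\exp(dm\log m+O(m))$. The one genuine difference is how the test functions are produced and normalised. The paper constructs, following \cite[Theorem 3.5]{jmaa}, a \emph{bounded} sequence $(f_\ell)\subset\mathcal{S}_\omega(\mathbb{R})$ with prescribed large derivative values $f_\ell^{(\ell)}(x_0)=\exp\left(\log(\ell)\varphi_\omega^*\bigl(\tfrac{\ell}{\log \ell}\bigr)\right)$, so the largeness sits in the prescribed jet while $p_{\omega,\mu}(f_m)\le D_\mu$ uniformly; you instead prescribe unit jets $f_m^{(m)}(x_0)=1$ and push the quantitative information into the norm decay $p_{\omega,\mu}(f_m)\le K\exp(-k\varphi_\omega^*(m/k))$, obtained from surjectivity of the Borel map (valid here since $\omega=|\cdot|^{1/d}$ is a strong weight, as the paper itself uses in Proposition \ref{squaremtop}) together with the open mapping theorem between the Fr\'echet spaces $\mathcal{S}_\omega(\mathbb{R})$ and $\mathcal{S}_\omega(\{0\})$, transported to $x_0$ by translation. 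The two devices are interchangeable in effect: your route avoids importing the explicit construction from \cite{jmaa} at the price of the OMT scaling argument (which is sound: openness of the surjection gives, for each $\mu$, indices $k$ and a constant $K$ so that every jet lifts with $p_{\omega,\mu}$-norm controlled by its $k$-th jet norm), while the paper's route is more self-contained within the Gelfand--Shilov toolkit it has already built. The final asymptotics match: both reduce to $m^2\log|\Lambda|\le dm\log m+O(m)$, which is impossible for large $m$.
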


\begin{proof}
Set $\sigma:= \omega(\bullet^{\frac{1}{2}})$ and $\alpha:=|\psi'(x_0)|>1$. Proceeding by contradiction, we assume that for every $\lambda>0$ there are $\mu>0$, $C>0$ such that \begin{equation}\label{contradictionmtop}
	p_{\sigma, \lambda}(f\circ \psi_m)\leq C^m \hspace{0.05cm} p_{\omega, \mu}(f)
\end{equation} for all $f\in \mathcal{S}_{\omega}(\mathbb{R})$, $m\in\mathbb{N}$. Fix $\lambda>0$ and $\mu>0$ so that \eqref{contradictionmtop} holds. We also recall that 
\begin{equation*}
	\varphi_\omega^\ast(x) = xd\log\left(\frac{xd}{e}\right), \varphi_\sigma^\ast(x)=2xd\log\left(\frac{2xd}{e}\right)\end{equation*} 
for all $x\geq 0$, and hence, we have that $$\exp(-\lambda \varphi_\sigma^\ast(\frac{j}{\lambda}))=\left(\frac{\lambda e}{2dj}\right)^{2dj}=A_{\lambda, d}^j\hspace{0.05cm}\frac{1}{j^{2dj}},$$ for all $j\in\mathbb{N}$, where $A_{\lambda, d}=(\frac{\lambda e}{2d})^{2d}>0$. Without loss of generality, we may assume $A_{\lambda, d}>1$; otherwise we take $\lambda>0$ larger in order to ensure so. \\

Proceeding as in the proof of \cite[Theorem 3.5.]{jmaa}, since $$a_j=\exp\left(\log(j)\varphi_\omega^\ast(\frac{j}{\log(j)})\right)=B^j\hspace{0.05cm} \left(\frac{j}{\log(j)}\right)^{jd},$$ for all $j\in\mathbb{N}$, where $B\equiv B_d=(\frac{d}{e})^d>0$, we can construct a sequence of functions $(f_\ell)_\ell \subset \mathcal{S}_{\omega}(\mathbb{R})$ that is bounded in $\mathcal{S}_{\omega}(\mathbb{R})$ and that verifies: \begin{equation*}
	f_\ell^{(j)}(x_0)=\delta_{\ell}^j \hspace{0.05cm} B^j \hspace{0.05cm}\left(\frac{j}{\log(j)}\right)^{jd},
\end{equation*} for all $\ell\in\mathbb{N}, j\in\mathbb{N}$, where $\delta_{\ell}^j=0$ whenever $\ell\not=j$ and $\delta_{\ell}^j=1$ otherwise. \\

Obviously, $\psi_m(x_0)=x_0$ and $\psi_m'(x_0)=\psi'(x_0)^m$ for all $m\in\mathbb{N}$. So, after applying Faa Di Bruno's formula (see, for instance, \cite[Lemma 4.1.]{jmaa}), we obtain that \begin{equation*}\begin{split}
		|(f_j\circ \psi_m)^{(j)}(x_0)|\hspace{0.05cm} A_{\lambda, d}^j\hspace{0.05cm}\frac{1}{j^{2dj}}&=  |f_j^{(j)}(x_0)\psi_m'(x_0)^j| \hspace{0.05cm} A_{\lambda, d}^j\hspace{0.05cm}\frac{1}{j^{2dj}}\\&=(A_{\lambda, d}\hspace{0.05cm}B)^j\hspace{0.05cm}\left(\frac{j}{\log(j)}\right)^{jd} \hspace{0.05cm} \alpha^{jm} \hspace{0.05cm}\frac{1}{j^{2dj}} \\&=(A_{\lambda, d}\hspace{0.05cm}B)^j\hspace{0.05cm}\left(\frac{1}{j\log(j)}\right)^{jd} \hspace{0.05cm} \alpha^{jm}
	\end{split}
\end{equation*} for all $j\in\mathbb{N}$, $m\in\mathbb{N}$. Now, putting $j=m$ in the above equality, we get \begin{equation}\label{loginequalitymtop}
	|(f_m\circ \psi_m)^{(m)}(x_0)|\hspace{0.05cm} A_{\lambda, d}^m\hspace{0.05cm}\frac{1}{m^{2dm}}= \left(\frac{1}{m\log(m)}\right)^{md} \hspace{0.05cm} \alpha^{m^2} \hspace{0.05cm} (A_{\lambda, d}\hspace{0.05cm} B)^m
\end{equation} for all $m\in\mathbb{N}$. Using the definition of $p_{\sigma, \lambda}(f\circ \psi_m)$ (with $q=0, x=x_0, j=m$), the fact that there is $D_\mu>0$ so that $p_{\omega, \mu}(f_m)\leq D_\mu,$ for all $m\in\mathbb{N}$ and combining \eqref{contradictionmtop} with \eqref{loginequalitymtop}, we obtain: \begin{equation*}
	D_\mu C^m \geq \left(\frac{1}{m\log(m)}\right)^{md} \hspace{0.05cm} \alpha^{m^2} \hspace{0.05cm} (A_{\lambda, d} \hspace{0.05cm} B)^m
\end{equation*} for all $m\in\mathbb{N}$. In turn, this implies that there is a constant $H_{\lambda,\mu, d}>0$ such that \begin{equation*}
	\left(\frac{\alpha^{\frac{m}{d}}}{m\log(m)}\right)^{d} \leq H_{\lambda,\mu, d}
\end{equation*} for all $m\in\mathbb{N}$, which is a contradiction with the fact that $$\left(\frac{\alpha^{\frac{m}{d}}}{m\log(m)}\right)\to \infty \mbox{ as } m\to \infty.\qedhere
$$
\end{proof}

Recall that if $\psi$ is a polynomial of degree greater than or equal to two, then the equicontinuity of the composition operator $C_\psi:\mathcal{S}_{\omega}(\mathbb{R})\to \mathcal{S}_{\sigma}(\mathbb{R})$, with $\sigma=\omega(\bullet^{\frac{1}{a}})$ for $a>2$, is equivalent to the lack of fixed points of the polynomial $\psi$ (see \cite{RACSAM}). On the other hand, equicontinuity implies always $m$-topologizability. Let us observe that if $\psi(x)=x^2+\frac{1}{4}$ one has that $\psi(\frac{1}{2})=\frac{1}{2}$ and $\psi'(\frac{1}{2})=1$. So we could not apply Theorem \ref{repellingfixedpoint} and hence, we do not know yet if the associated composition operator $C_\psi:\mathcal{S}_{\omega}(\mathbb{R})\to \mathcal{S}_{\omega(\bullet^{\frac{1}{2}})}(\mathbb{R})$ is $m-$topologizable or not. More generally, the following problem arises: 

\begin{remark}\rm It is an open problem to establish
what happens with Theorem \ref{repellingfixedpoint} if the polynomial $\psi$ appearing therein only have non-repelling fixed points. An example of such polynomial is $\psi(x)=x^2+\frac{1}{4}$.  
\end{remark}

\section{Topologizability (and m-topologizability) of the composition operator associated with a polynomial $\psi$ of degree one on Gelfand-Shilov classes}\label{sec3}

The aim of this section is to study the topologizability of the composition operator $C_\psi$ acting on Gelfand-Shilov classes, when the polynomial $\psi$ is of degree one. In this case, we know that the composition operator $C_\psi: \mathcal{S}_{\omega}(\mathbb{R})\to \mathcal{S}_{\omega}(\mathbb{R})$ is continuous and hence, this is the usual and proper setting to work in. In \cite[Proposition 3.1]{RACSAM}, it was proved that if $\psi(x)=ax+b$, with $a\not=0$, for $x\in\mathbb{R}$ then, $C_\psi: \mathcal{S}_{\omega}(\mathbb{R})\to \mathcal{S}_{\omega}(\mathbb{R})$ is power bounded if and only if $C_\psi: \mathcal{S}_{\omega}(\mathbb{R})\to \mathcal{S}_{\omega}(\mathbb{R})$ is mean ergodic if and only if $\psi(x)=x$ for $x\in\mathbb{R}$ or $\psi(x)=-x$ for $x\in\mathbb{R}$. \\

Recall that two polynomials $\psi, \phi$ are said to be linearly equivalent if there exists $\ell(x)=\alpha x+\beta$, for $x\in\mathbb{R}$, with $\alpha, \beta\in\mathbb{R}$ and $\alpha\not = 0,$ such that $\phi(x)=(\ell \circ \psi\circ\ell^{-1})(x)$ for all $x\in\mathbb{R}$. For what follows it is convenient to observe  that the following result is valid, as it is easy to  show.

\begin{prop} Let $\psi(x) = a x + b$, for  $x\in\mathbb{R}$, with $a,b\in {\mathbb R}$ and $a\neq 0.$.
\begin{itemize}
	\item[(a)] If $a\neq 1,$ then $\psi$ is linearly equivalent to $\phi(x) = ax$, for $x\in \mathbb{R}$. 
	\item[(b)] If $a = 1$ and $b\neq 0,$ then $\psi$ is linearly equivalent to $\phi(x) = x + 1,$ for $x\in\mathbb{R}$.
\end{itemize}
\end{prop}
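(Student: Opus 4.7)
The proposition asserts a very concrete algebraic normal form for affine maps under affine conjugation, so my plan is simply to exhibit the conjugating $\ell$ in each case and verify by direct computation. The guiding principle is standard: if $\psi$ has a fixed point, conjugate it to the origin; if it has no fixed point (the translation case), conjugate so the translation length becomes $1$.

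For part $(a)$, with $a\neq 1$, the map $\psi(x)=ax+b$ admits the unique fixed point $x_0:=\frac{b}{1-a}$. I would take $\ell(x):=x-x_0$, which is affine with nonzero leading coefficient $\alpha=1$ and $\beta=-x_0$, and compute
\[
(\ell\circ\psi\circ\ell^{-1})(x)=\ell\bigl(a(x+x_0)+b\bigr)=ax+(a-1)x_0+b.
\]
Plugging in $x_0=\frac{b}{1-a}$ gives $(a-1)x_0+b=-b+b=0$, so $\ell\circ\psi\circ\ell^{-1}(x)=ax=\phi(x)$, as required.

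For part $(b)$, with $a=1$ and $b\neq 0$, there is no fixed point, so instead I rescale. Take $\ell(x):=x/b$, which is affine with $\alpha=1/b\neq 0$ and $\beta=0$, and $\ell^{-1}(x)=bx$. Then
\[
(\ell\circ\psi\circ\ell^{-1})(x)=\ell(bx+b)=\frac{bx+b}{b}=x+1=\phi(x).
\]

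There is really no obstacle to speak of: the only mild point is choosing the right normal form for each case (translation of the fixed point for $a\neq 1$, rescaling by $b$ for $a=1$), and then the conjugations verify themselves in one line each. I would present the two cases side by side with the explicit $\ell$, noting that in both cases $\alpha\neq 0$ so $\ell$ is indeed an affine isomorphism of $\mathbb{R}$.
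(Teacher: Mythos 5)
Your proof is correct: conjugating by the translation to the unique fixed point $x_0=\frac{b}{1-a}$ in case (a) and by the rescaling $\ell(x)=x/b$ in case (b) is exactly the routine verification the paper has in mind when it states the proposition "as it is easy to show" without proof. Both computations check out, and noting $\alpha\neq 0$ in each case confirms $\ell$ is an admissible affine conjugation, so nothing is missing.
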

It is useful to notice that if two polynomials $\psi, \phi$ are linearly equivalent, then the composition operator $C_\psi$ is clearly topologizable ($m-$topologizable, respectively) if and only if the composition operator $C_\phi$ is topologizable ($m-$topologizable, respectively). The proof is almost immediate when one observes that if $\psi=\ell \circ \phi\circ \ell^{-1}$, with $\ell$ being a non-constant affine function, then $C_{\psi_m}=C_\ell \circ C_{\phi_m}\circ C_{\ell}^{-1}$ for all $m\in\mathbb{N}$, where  the composition operator $C_\ell: \mathcal{S}_{\omega}(\mathbb{R})\to \mathcal{S}_{\omega}(\mathbb{R})$ is clearly an isomorphism onto. Therefore,  we may assume without loss of generality that either $\psi(x)=a x$, for  all $x\in\mathbb{R}$, with $a\not =0, \pm 1$,  or $\psi(x)=x+1$, for all $x\in\mathbb{R}$.\\

			First, we  deal with the case where $\psi$ is a translation. We know that composition operators acting on Gelfand-Shilov classes associated with translations are not power bounded (see \cite[Proposition 3.1]{RACSAM}) but, just as in the Schwartz class (see \cite[Example 4.14.]{akj}), we still have $m-$topologizability.

			\begin{prop}\label{toptrans}
				Let $\omega$ be a weight function and $\psi(x)=x+1$, for  $x\in\mathbb{R}$. Then the composition operator $C_\psi: \mathcal{S}_{\omega}(\mathbb{R})\to \mathcal{S}_{\omega}(\mathbb{R})$ is m-topologizable.
			\end{prop}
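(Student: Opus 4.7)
The key is to work with the equivalent system of seminorms
\[
q_{\omega,\lambda}(f) = \sup_{x\in\mathbb{R}}\sup_{n\in\mathbb{N}_0} |f^{(n)}(x)|\exp\!\left(-\lambda \varphi^*_\omega\!\left(\tfrac{n}{\lambda}\right)\right)\exp(\lambda\omega(x))
\]
already recorded in Section~\ref{sec1.1}, since with this formulation the decay in $x$ is expressed through $\omega$, which behaves well under translation thanks to condition~$(\alpha)$. Writing $\psi_m(x)=x+m$ and using $(C_{\psi_m}f)^{(n)}(x)=f^{(n)}(x+m)$, the change of variable $y=x+m$ gives
\[
q_{\omega,\lambda}(C_{\psi_m}f) = \sup_{y,n}|f^{(n)}(y)|\exp\!\left(-\lambda\varphi^*_\omega\!\left(\tfrac{n}{\lambda}\right)\right)\exp(\lambda\omega(|y-m|)).
\]

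By condition $(\alpha)$, $\omega(|y-m|)\leq \omega(|y|+m)\leq L(\omega(|y|)+\omega(m)+1)$, so the exponential factor splits as $\exp(\lambda L(\omega(m)+1))\exp(\lambda L\omega(|y|))$. Because $\lambda\mapsto \lambda\varphi^*_\omega(n/\lambda)$ is decreasing (an elementary consequence of the convexity of $\varphi^*_\omega$ with $\varphi^*_\omega(0)=0$), we have $\exp(-\lambda\varphi^*_\omega(n/\lambda))\leq \exp(-\lambda L\,\varphi^*_\omega(n/(\lambda L)))$. Putting these together,
\[
q_{\omega,\lambda}(C_{\psi_m}f)\leq e^{\lambda L(\omega(m)+1)}\,q_{\omega,\lambda L}(f).
\]

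The remaining task is to dominate $e^{\lambda L(\omega(m)+1)}$ by $C^m$ for some $C\geq 1$ independent of $m$. Condition $(\beta)$ forces $\omega(t)/t\to 0$ as $t\to \infty$ (otherwise the integral in $(\beta)$ diverges by a standard comparison), hence $\omega(m)/m$ is bounded on $\mathbb{N}$, say $\omega(m)\leq K m$ for all $m\geq 1$. This yields $e^{\lambda L(\omega(m)+1)}\leq e^{\lambda L(K+1)m}=:C^m$. Combining everything,
\[
q_{\omega,\lambda}(C_{\psi_m}f)\leq C^m\,q_{\omega,\mu}(f),\qquad \mu:=\lambda L,
\]
for every $f\in\mathcal{S}_\omega(\mathbb{R})$ and $m\in\mathbb{N}$, which is the required $m$-topologizability.

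The only genuine subtlety is ensuring that the price paid for translating by $m$ is truly exponential in $m$, rather than, say, of the form $e^{K\omega(m)}$ with $\omega(m)$ possibly growing faster than $m$; this is what condition~$(\beta)$ rules out. Everything else is bookkeeping with the seminorms and the monotonicity property of $\lambda\mapsto \lambda\varphi^*_\omega(\cdot/\lambda)$, and passing from the $q$-seminorms back to the $p$-seminorms is routine since they define the same Fr\'echet topology on $\mathcal{S}_\omega(\mathbb{R})$.
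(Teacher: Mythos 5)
Your proof is correct and follows essentially the same route as the paper: change of variable $y=x+m$, splitting $\omega(|y-m|)\leq L(\omega(|y|)+\omega(m)+1)$ via condition $(\alpha)$, and controlling $\omega(m)\leq Km$ via condition $(\beta)$ to get the $C^m$ bound. The only cosmetic difference is that you use the one-parameter seminorms $q_{\omega,\lambda}$, which forces the (correct) extra monotonicity step $\lambda\varphi^*_\omega(n/\lambda)\geq \lambda L\,\varphi^*_\omega\left(n/(\lambda L)\right)$, whereas the paper works with the two-parameter seminorms $q_{\omega,\lambda,\mu}$ and only enlarges $\mu$, making that step unnecessary.
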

			\begin{proof} Fix $\lambda > 0, \mu > 0$ and observe that  that $$q_{\omega,\lambda, \mu}(C_{\psi_m}f)
				=\sup_{j\in {\mathbb N}_0}\sup_{y\in {\mathbb R}} |f^{(j)}(y)| e^{-\lambda\varphi_\omega^\ast(\frac{j}{\lambda}) + \mu\omega(y-m)},
				$$
				for every $f\in \mathcal{S}_{\omega}(\mathbb{R})$ and $m\in\mathbb{N}$.\\
				
				Since $\mu\hspace{0.05cm} \omega(y-m)\leq \mu\hspace{0.05cm} L\left(1+\omega(y)+\omega(m)\right)$, for all $y\in\mathbb{R}$ and $m\in\mathbb{N}$, it follows, for every $f\in\mathcal{S}_{\omega}(\mathbb{R})$, $\lambda>0$ and $\mu>0$, that 
				$$
				q_{\omega, \lambda, \mu}(C_{\psi_m}f)\leq \exp\left(\mu\hspace{0.05cm} L (1+\omega(m))\right)q_{\omega,\lambda,\mu\hspace{0.05cm} L}(f),
				$$
				for all $m\in\mathbb{N}$.\\
				
				We now observe that condition $(\beta)$ in Definition \ref{def:weight} together with the fact that $\omega$ is increasing imply  that $\frac{\omega(t)}{t}\to 0$ as $t\to \infty$ (cf. \cite[1.2.Remark(b)]{mt}). 
					Therefore, there is $Q>0$ such that $\omega(t)\leq Q t$ for all $t\geq 0$. It follows for every $f\in\mathcal{S}_{\omega}(\mathbb{R})$ that
					$$
					q_{\omega,\lambda, \mu}(C_{\psi_m}f)\leq \exp\left(\mu\hspace{0.05cm} L\right)\exp\left(\mu\hspace{0.05cm} L Q \hspace{0.05cm} m\right)q_{\omega,\lambda,\mu\hspace{0.05cm} L}(f),$$ 
					for all $m\in\mathbb{N}$. This clearly completes the proof.
				\end{proof}
				
				\begin{remark}
					This result is still true for weaker classes of Braun-Meise-Taylor weight functions $\omega$, provided that $\omega$ verifies the condition that $\omega(t)\leq Q t$ for  $t>0$ large enough and for some $Q>0$ (see, for instance, \cite{Schindl} and the references therein for a survey of such classes). 
				\end{remark}

				In the setting of the Gelfand-Shilov spaces $\Sigma_s(\mathbb{R})$, $s>1$,   an interesting and non-obvious consequence follows from  Proposition \ref{toptrans} due to the  use of another equivalent system of semi-norms for $\Sigma_s(\mathbb{R})$, as given in Section \ref{sec1.1}.

				\begin{corollary}
					Let $s>1$. Then there is $\lambda>0$ such that for every $A>0$ there is no $f\in \Sigma_s(\mathbb{R})\setminus\{0\}$   such that the conditions  $supp f\subset [-A,A]$ and  $p_{\lambda}(f)$ is attained in $j,q\in\mathbb{N}_0$ with $q-j$  arbitrarily large are simultaneously satisfied.  
				\end{corollary}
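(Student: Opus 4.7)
The plan is to deduce the corollary from Proposition \ref{toptrans} specialised to $\omega(t)=t^{1/s}$, for which $\mathcal{S}_\omega(\mathbb{R})=\Sigma_s(\mathbb{R})$ algebraically and topologically. Proposition \ref{toptrans} asserts that the composition operator $C_\psi$ with $\psi(x)=x+1$ is $m$-topologizable on $\mathcal{S}_\omega(\mathbb{R})$; transferring to the equivalent seminorm system $\{p_\mu\}_{\mu>0}$ of $\Sigma_s(\mathbb{R})$ recalled in Subsection \ref{sec1.1}, this yields: for every $\lambda>0$ there exist $\mu>0$ and $C\geq 1$ with
\[
p_\lambda(f(\cdot+m))\leq C^m p_\mu(f)\qquad (f\in\Sigma_s(\mathbb{R}),\ m\in\mathbb{N}).
\]
Any such $\lambda$ will be the one the corollary requires.

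I then argue by contradiction. Suppose that for some $A>0$ there is $f\in\Sigma_s(\mathbb{R})\setminus\{0\}$ whose support lies in $[-A,A]$ and such that $p_\lambda(f)$ is attained at triples $(j_0,q_0,x_0)$ with $q_0-j_0$ (and so $q_0$) arbitrarily large. Since $p_\lambda(f)>0$ and $q_0>0$, the attaining identity
\[
p_\lambda(f)=|x_0|^{q_0}|f^{(j_0)}(x_0)|\frac{\lambda^{j_0+q_0}}{j_0!^s q_0!^s}
\]
forces $x_0\neq 0$, while the support condition gives $|x_0|\leq A$. Testing the supremum that defines $p_\lambda(f(\cdot+m))$ at the triple $(j_0,q_0,x_0-m)$ and dividing by the attaining identity, I get
\[
p_\lambda(f(\cdot+m))\geq \left(\frac{|x_0-m|}{|x_0|}\right)^{q_0}p_\lambda(f)\geq \left(\frac{m-A}{A}\right)^{q_0}p_\lambda(f)
\]
for every $m\geq A$.

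Confronting this lower bound with the upper bound $C^m p_\mu(f)$ coming from $m$-topologizability and then specialising to $m=q_0$ (which I may assume $\geq 2A$), I arrive at
\[
\left(\frac{q_0-A}{AC}\right)^{q_0}\leq \frac{p_\mu(f)}{p_\lambda(f)}.
\]
The right-hand side is a finite constant depending only on $f$, $\lambda$, $\mu$, while the left-hand side diverges as $q_0\to\infty$, producing the desired contradiction. The main delicate point is the transfer between the $q_{\omega,\lambda,\mu}$-seminorms used in Proposition \ref{toptrans} and the $p_\mu$-system of $\Sigma_s(\mathbb{R})$; once this equivalence is invoked, the remainder is a clean comparison of the exponential rate $C^m$ against the super-exponential rate $((m-A)/A)^{q_0}$, balanced by the choice $m=q_0$.
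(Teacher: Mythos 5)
There is a genuine gap, and it comes from how you read the statement. For a single fixed $f$, the condition ``$p_\lambda(f)$ is attained at pairs $(j,q)$ with $q-j$ arbitrarily large'' can never occur at all: by the $\Sigma_s$-analogue of Lemma \ref{lemmaBeurling} (see Remark \ref{R-Max}), for each $f\in \Sigma_s(\mathbb{R})\setminus\{0\}$ and each $\lambda>0$ the supremum defining $p_\lambda(f)$ is attained at only finitely many pairs $(j,q)$, so the single-function reading of the corollary is vacuously true, for every $\lambda$, without Proposition \ref{toptrans} and without any support assumption. The non-trivial content, which is what the paper proves, is a statement uniform over a family of functions: for the \emph{specific} $\lambda$, namely the index $\mu$ produced by applying $m$-topologizability to $p_1$ (i.e. $p_1(f(\cdot+m))\leq D^m p_\mu(f)$), and for every $A>0$, there is no sequence $(f_m)\subset \Sigma_s(\mathbb{R})\setminus\{0\}$ supported in $[-A,A]$ whose norms $p_\mu(f_m)$ are attained at pairs $(j_m,q_m)$ with $q_m-j_m$ growing without bound (quantitatively, $q_m-j_m\geq \log_\mu(D^m)$).

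Your argument does not yield this uniform version. Your final inequality $\left(\frac{q_0-A}{AC}\right)^{q_0}\leq \frac{p_\mu(f)}{p_\lambda(f)}$ carries the $f$-dependent ratio $p_\mu(f)/p_\lambda(f)$; since large values of $q-j$ at attainment can only be realized along a sequence of \emph{different} functions $f_m$, this ratio need not stay bounded --- on the subspace of $\Sigma_s$-functions supported in $[-A,A]$ the norms $p_\lambda$ and $p_\mu$ are not equivalent --- so letting $q_0\to\infty$ produces no contradiction. This is also why your claim that ``any $\lambda$ works'' is unjustified for the meaningful reading. The paper sidesteps the problem by formulating the attainment hypothesis for the \emph{same} seminorm $p_\mu$ that appears on the right-hand side of the topologizability estimate: testing $p_1(f_m(\cdot+m))$ at the attainment data of $p_\mu(f_m)$ gives $D^m p_\mu(f_m)\geq \left(\frac{m-A}{A}\right)^{q_m}\mu^{-(j_m+q_m)}p_\mu(f_m)$, the factor $p_\mu(f_m)$ cancels, and the extra cost $\mu^{-(j_m+q_m)}$ is absorbed by taking $\frac{m-A}{A}>\mu^2$ and using $q_m-j_m\geq \log_\mu(D^m)$. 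To repair your proof you would have to phrase the attainment hypothesis in terms of the right-hand seminorm (or otherwise eliminate the $f$-dependence), which is precisely the paper's argument.
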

				\begin{proof}
					If $\psi(x)=x+1$, for $x\in\mathbb{R}$, by Proposition \ref{toptrans} the composition operator $C_\psi\colon \Sigma_s(\mathbb{R})\to \Sigma_s(\mathbb{R})$ is $m$-topologizable. Therefore,  there  exist $\mu\geq 1$ and $D\geq 1$  such that for each $m\in \mathbb{N}$ 
					\begin{equation}\label{contradiction1}
						p_{1}(f(\bullet+m))= \sup_{j,q\in\mathbb{N}_0}\sup_{x\in\mathbb{R}} \left(\frac{|x|}{|x+m|}\right)^q\frac{|x+m|^q \hspace{0.05cm}|f^{(j)}(x+m)|}{j!^s q!^s}
						\leq D^m p_{\mu}(f)
					\end{equation} 
					for all $f\in \Sigma_s(\mathbb{R})$. Without loss of generality, we may assume that $\log_{\mu}(D)\geq 1$ and hence,  $\log_{\mu}(D^m)\geq 1$ for all $m\in\mathbb{N}$.
					
					Proceeding by contradiction, we suppose that there exist $A>0$ and sequences $(f_m)_m \subset  \Sigma_s(\mathbb{R})\setminus\{0\}$ with $supp f_m\subset [-A,A]$, $(q_m)_m\subset \mathbb{N}_0$ and $(j_m)_m\subset \mathbb{N}_0$ such that $p_\mu(f_m)= \sup_{x\in\mathbb{R}}|x|^{q_m} |f_m^{(j_m)}(x)| \frac{\mu^{j_m+q_m}}{j_m!^s q_m!^s}$ and $q_m-j_m\geq \log_\mu(D^m)>0$, for all $m\in\mathbb{N}$. Since $\frac{m-A}{A}\to\infty$ as $m\to\infty$, we have  that $\frac{m-A}{A}>\mu^2$ for all $m\in\mathbb{N}$ large enough. So, by inequality \eqref{contradiction1} we obtain that \begin{equation*}
							D^m p_\mu(f_m) \geq \left(\frac{m-A}{A}\right)^{q_m} \left(\frac{1}{\mu}\right)^{j_m+q_m} p_{\mu}(f_m) > \mu^{ q_m-j_m}p_{\mu}(f_m)> D^m p_{\mu}(f_m)
					\end{equation*} 
					for all $m\in\mathbb{N}$ large enough. This  is clearly a contradiction.
				\end{proof}
				
				We now  investigate the topologizability of $C_\psi$ when $\psi$ is a dilatation, i.e., $\psi(x)=ax$, for $x\in \mathbb{R}$, with $a\not=0$. To this end, some results are needed.

				\begin{lemma}\label{lemmaBeurling} Let $\omega$ be a weight function, 
					$f\in \mathcal{S}_\omega(\mathbb{R})$ and  $\lambda>0$. Then  for every $\varepsilon>0$ there is $M= M(\varepsilon)>0$ such that for all $j, q\in\mathbb{N}_0$ verifying $j+q\geq M$ one has that $\sup_{x\in\mathbb{R}} |x|^q |f^{(j)}(x)| \exp\left(-\lambda \varphi_\omega^*(\frac{j+q}{\lambda})\right)\leq \varepsilon$.
				\end{lemma}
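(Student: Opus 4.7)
The plan is to exploit that $f \in \mathcal{S}_\omega(\mathbb{R})$ means $p_{\omega,\mu}(f) < \infty$ for every $\mu > 0$, not only for the given $\mu = \lambda$. I will use $\mu = 2\lambda$: writing $n := j+q$, the bound $(1+|x|)^q |f^{(j)}(x)| \leq p_{\omega, 2\lambda}(f)\,\exp(2\lambda\,\varphi_\omega^*(n/(2\lambda)))$ combined with $|x|^q \leq (1+|x|)^q$ yields
\begin{equation*}
\sup_{x \in \mathbb{R}} |x|^q |f^{(j)}(x)| \exp\!\left(-\lambda\varphi_\omega^*\!\left(\tfrac{n}{\lambda}\right)\right) \leq p_{\omega, 2\lambda}(f)\, \exp\!\bigl(-[\lambda\varphi_\omega^*(n/\lambda) - 2\lambda\varphi_\omega^*(n/(2\lambda))]\bigr).
\end{equation*}
Hence it suffices to show that $D(n) := \lambda\varphi_\omega^*(n/\lambda) - 2\lambda\varphi_\omega^*(n/(2\lambda)) \to \infty$ as $n \to \infty$.

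To prove this key claim, I would use the variational definition of the Young conjugate. Let $t_n \geq 0$ be a (near-)maximizer of $\sup_{t \geq 0}(nt - 2\lambda\varphi_\omega(t)) = 2\lambda\varphi_\omega^*(n/(2\lambda))$. Since $\lambda\varphi_\omega^*(n/\lambda) \geq nt_n - \lambda\varphi_\omega(t_n)$ by merely testing the supremum at this single point, subtracting yields $D(n) \geq \lambda\varphi_\omega(t_n) + O(1)$. It then remains to prove $\varphi_\omega(t_n) \to \infty$: condition $(\gamma)$ of Definition \ref{def:weight} yields $\varphi_\omega(t)/t = \omega(e^t)/t \to \infty$, and for the convex function $\varphi_\omega$ this forces $\varphi_\omega'(t) \to \infty$ (via $\varphi_\omega'(t)\cdot t \geq \varphi_\omega(t) - \varphi_\omega(0)$). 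Because the first-order condition for the maximizer gives $\varphi_\omega'(t_n) = n/(2\lambda) \to \infty$, we obtain $t_n \to \infty$ and hence $\varphi_\omega(t_n) \to \infty$.

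The main obstacle I foresee is the mild technical point that $\varphi_\omega$ need not be everywhere differentiable, so there may be no exact pointwise maximizer and $\varphi_\omega'$ must be replaced by a one-sided derivative or a subgradient; both exist everywhere by convexity, and the preceding argument goes through unchanged. With $D(n) \to \infty$ in hand, for any $\varepsilon > 0$ one picks $M = M(\varepsilon)$ large enough that $D(n) \geq \log(p_{\omega,2\lambda}(f)/\varepsilon)$ for $n \geq M$, and the displayed bound delivers the required estimate $\sup_x |x|^q|f^{(j)}(x)|\exp(-\lambda\varphi_\omega^*((j+q)/\lambda)) \leq \varepsilon$.
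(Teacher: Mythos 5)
Your proof is correct, but it follows a genuinely different route from the paper's. The paper's argument has the same skeleton (estimate the $\lambda$-weighted quantity by $p_{\omega,\mu}(f)$ times a factor depending only on $j+q$ that tends to $0$), but its only technical ingredient is the standard lemma cited as \cite[Lemma 3.3]{jmaa} or \cite[Lemma A.1]{paley}, which for the given $\lambda$ produces $\mu>0$, $A>1$, $D>0$ with $\exp\left(-\lambda\varphi_\omega^*\left(\frac{j+q}{\lambda}\right)\right)\leq D\,A^{-(j+q)}\exp\left(-\mu\varphi_\omega^*\left(\frac{j+q}{\mu}\right)\right)$; that lemma rests on condition $(\alpha)$ and yields quantitative geometric decay in $j+q$. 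You instead fix $\mu=2\lambda$ and prove directly, via the variational definition of the Young conjugate and a (sub)gradient analysis of the maximizer $t_n$, that $\lambda\varphi_\omega^*(n/\lambda)-2\lambda\varphi_\omega^*(n/(2\lambda))\to\infty$ — a qualitative divergence, which is weaker than the cited exponential gap but exactly what the lemma needs. Your approach is self-contained (it only uses convexity and the superlinearity coming from $(\gamma)$, equivalently the paper's stated fact $\varphi_\omega^*(s)/s\to\infty$), while the paper's is a one-line reduction to a known estimate that also serves elsewhere in the paper. Two cosmetic remarks: the implication $\varphi_\omega'(t_n)\geq n/(2\lambda)\to\infty \Rightarrow t_n\to\infty$ is justified by the finiteness and monotonicity of the one-sided derivatives of the convex function $\varphi_\omega$ (so it is bounded on bounded sets), not by $\varphi_\omega'(t)\to\infty$, which is beside the point; and you could avoid first-order conditions altogether, since $2\lambda\varphi_\omega^*\left(\frac{n}{2\lambda}\right)=nt_n-2\lambda\varphi_\omega(t_n)\leq nt_n$ together with $\varphi_\omega^*(s)/s\to\infty$ already forces $t_n\to\infty$. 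Finally, the degenerate case $p_{\omega,2\lambda}(f)=0$ (i.e.\ $f=0$) is trivial and can be dismissed in passing.
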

				\begin{proof}
					For the given $\lambda>0$  there are $\mu>0$, $A>1$ and $D>0$ such that 
					\begin{equation*}
						\exp\left(-\lambda \varphi_\omega^*\left(\frac{j+q}{\lambda}\right)\right)\leq D \hspace{0.01cm} \left(\frac{1}{A}\right)^{j+q}\hspace{0.03cm} \exp\left(-\mu\varphi_\omega^*\left(\frac{j+q}{\mu}\right)\right)
					\end{equation*}
					for all $j,q\in\mathbb{N}_0$ (cf. \cite[Lemma 3.3]{jmaa} or \cite[Lemma A.1]{paley}). Since $f\in\mathcal{S}_\omega(\mathbb{R})$, we have  that 
					\begin{equation*}
						\sup_{x\in\mathbb{R}} |x|^q |f^{(j)}(x)| \exp\left(-\lambda \varphi_\omega^*\left(\frac{j+q}{\lambda}\right)\right)  \leq D \hspace{0.01cm} \left(\frac{1}{A}\right)^{j+q} p_{\omega,\mu}(f)
					\end{equation*} 
					for all $j,q\in\mathbb{N}_0$, with $p_{\omega,\mu}(f)<\infty$. The result easily follows. 
				\end{proof}
				
				\begin{remark}\label{R-Max} Let $\omega$ be a weight function. For $\lambda>0$ and $f\in \mathcal{S}_\omega(\mathbb{R})$ given,  
					note that Lemma \ref{lemmaBeurling} implies that there is a finite number of pairs $(j,q)\in\mathbb{N}^2_0$ for which the supremum in the norm 
					\begin{equation*}
						p_{\lambda}(f):=\sup_{j,q\in\mathbb{N}_0}\sup_{x\in\mathbb{R}} |x|^q |f^{(j)}(x)|  \exp\left(-\lambda \varphi_\omega^*\left(\frac{j+q}{\lambda}\right)\right)
					\end{equation*} 
					is attained. Note that the system $\{p_\lambda: \lambda>0\}$ of norms also generates the topology of $\mathcal{S}_\omega(\mathbb{R})$.
				\end{remark}

				\begin{prop}\label{main}
					For every $\lambda>0$ and $m\in\mathbb{N}$, there is $g\in \mathcal{S}_\omega(\mathbb{R})$ for which the supremum in $p_\lambda(g)$ is only attained in $j,q\in\mathbb{N}_0$ verifying $j-q\geq m$. In particular, $j\geq m$. 
				\end{prop}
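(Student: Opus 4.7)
The plan is to rescale a fixed compactly supported bump function by a large dilation parameter. By condition $(\beta)$ pick $\phi \in \mathcal{S}_\omega(\mathbb{R}) \setminus \{0\}$ with $\operatorname{supp}(\phi) \subseteq [-1,1]$, and for $\rho \geq 1$ set $g_\rho(x):=\phi(\rho x)$; then $g_\rho \in \mathcal{S}_\omega(\mathbb{R})$ since $\mathcal{S}_\omega(\mathbb{R})$ is stable under the affine change of variable $x \mapsto \rho x$, and $\operatorname{supp}(g_\rho) \subseteq [-1/\rho, 1/\rho]$.

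The change of variables $y = \rho x$ yields, for every $j, q \in \mathbb{N}_0$, the identity
\begin{equation*}
\sup_{x \in \mathbb{R}} |x|^q |g_\rho^{(j)}(x)| = \rho^{j-q} \sup_{|y| \leq 1} |y|^q |\phi^{(j)}(y)|.
\end{equation*}
Setting $B_{j,q} := \sup_{|y| \leq 1} |y|^q |\phi^{(j)}(y)| \exp(-\lambda \varphi_\omega^*((j+q)/\lambda))$, this gives
\begin{equation*}
p_\lambda(g_\rho) = \sup_{j,q \in \mathbb{N}_0} \rho^{j-q} B_{j,q},
\end{equation*}
together with the uniform bound $B_{j,q} \leq K := p_\lambda(\phi) < \infty$. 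On the other hand $c_m := B_{m,0} > 0$, because $\phi \neq 0$ is compactly supported and hence $\phi^{(m)}$ cannot be identically zero (otherwise $\phi$ would be a polynomial of degree less than $m$, forcing $\phi \equiv 0$).

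The last step is to choose $\rho > \max(1, K/c_m)$. Then for any $(j,q)$ with $j - q \leq m-1$ we have
\begin{equation*}
\rho^{j-q} B_{j,q} \leq \rho^{m-1} K < \rho^m c_m \leq p_\lambda(g_\rho),
\end{equation*}
so no such pair can attain $p_\lambda(g_\rho)$. By Lemma \ref{lemmaBeurling} applied to $g_\rho$ the supremum in $p_\lambda(g_\rho)$ is attained on a non-empty finite set of pairs, and by the strict inequality above every such attaining pair must satisfy $j - q \geq m$ (and in particular $j \geq m$), giving $g := g_\rho$. The only real subtlety is noticing that the scaling identity places the factor $\rho^{j-q}$ in front of quantities uniformly bounded in $(j,q)$, which is what allows one to penalize the pairs with $j - q < m$ by simply enlarging $\rho$.
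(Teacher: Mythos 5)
Your proof is correct, and its engine is the same as the paper's: replace a fixed function by its dilate $x\mapsto f(\rho x)$, observe that the term indexed by $(j,q)$ in $p_\lambda$ gets multiplied by exactly $\rho^{j-q}$, and take $\rho$ large to penalize the pairs with $j-q<m$. Where you diverge is in how the choice of $\rho$ is closed. The paper starts from an arbitrary $f\in\mathcal{S}_\omega(\mathbb{R})\setminus\{0\}$, uses Lemma \ref{lemmaBeurling} to cut down to finitely many relevant pairs, and then picks $\rho$ larger than finitely many ratios $a_{q+k,q}/a_{q+k+1,q}$, followed by a small case analysis to exclude $k\le 0$ and $k\le m$ for large $q$. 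You instead specialize to a compactly supported $\phi$ (available by condition $(\beta)$), which buys you the uniform bound $B_{j,q}\le p_\lambda(\phi)=K$ for all $(j,q)$; together with $B_{m,0}>0$ (correctly justified: a compactly supported function with $\phi^{(m)}\equiv 0$ would be a polynomial, hence zero), a single comparison of every pair with $j-q\le m-1$ against the $(m,0)$ term settles the matter once $\rho>\max(1,K/c_m)$, and Lemma \ref{lemmaBeurling} is only invoked to guarantee that the supremum over pairs is actually attained. Your variant is shorter and avoids the multi-case bookkeeping; the paper's variant is marginally more flexible in that it produces the desired $g$ as a dilate of any prescribed nonzero $f$, but for the existence statement of Proposition \ref{main} this extra generality is not needed, and your argument adapts equally well to the symmetric statement of Remark \ref{Remark 3} by taking $\rho$ small.
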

				\begin{proof}
					Fix $f\in \mathcal{S}_\omega(\mathbb{R})\setminus \{0\}$, $\lambda>0$, $m\in\mathbb{N}$. For the sake of brevity, denote $a_{j,q}=\max_{x\in\mathbb{R}} |x|^q \hspace{0.05cm}|f^{(j)}(x)| \exp\left(-\lambda \varphi_\omega^*(\frac{j+q}{\lambda})\right),$ for all $j, q\in\mathbb{N}_0$. Observe that $a_{j,q}>0$ for all $j,q\in\mathbb{N}_0$ because $p_\lambda$ is a continuous norm over $\mathcal{S}_\omega(\mathbb{R})$. Consider the function $g(x)=f(\rho x)$, for all $x\in\mathbb{R}$, where $\rho> 1$ to be chosen later on. Clearly, $g\in \mathcal{S}_\omega(\mathbb{R})\setminus \{0\}$ and also, \begin{equation}
						\max_{x\in\mathbb{R}} |x|^q \hspace{0.05cm}|g^{(j)}(x)| \exp\left(-\lambda \varphi_\omega^*(\frac{j+q}{\lambda})\right)=\rho^{j-q} a_{j,q}
					\end{equation} for all $j,q\in\mathbb{N}_0$. Fix $m_0>m$. By Lemma \ref{lemmaBeurling}, there is $M>0$ such that for all $j,q\in\mathbb{N}_0$ verifying that $j+q\geq M$ one has that $a_{j,q}\leq \frac{\min\{a_{r,\ell}: r\leq m_0, \ell \leq m_0\}}{2}$. Now we choose $\rho> 1$ large enough so that the sequence $\{\rho^{k}\hspace{0.05cm} a_{q+k,q}: k\geq -q\}$ is strictly increasing up to $m$, for each $0\leq q\leq M$. This choice is possible because the condition: \begin{equation*}
						Q^{k+1}\hspace{0.04cm} a_{q+k+1,q} >Q^k \hspace{0.04cm}a_{q+k,q} 
					\end{equation*} for some $Q>0$, is equivalent to $Q>\frac{a_{q+k,q}}{a_{q+k+1,q}}$, and hence we only need to take $\rho>\max\{\frac{a_{q+k,q}}{a_{q+k+1,q}}: q\in\{0,1,...,M\}, k\in\{-q,-q+1,...,0,...,m\}\}$. This choice of $\rho>0$ implies that the maximum of the sequence $\{\rho^{k} \hspace{0.04cm}a_{q+k,q}: k\geq -q\}$ is only achieved in $k\geq m$. Denote $L_0:=\max\{\rho^{k} \hspace{0.05cm}a_{q+k,q}: q\in\mathbb{N}_0, k\geq -q\}$. Since $a_{0,1}>0$, we can also assume, without loss of generality, that 
				\begin{equation*}
						\rho>\frac{\max\{a_{r,\ell}: r, \ell\in\mathbb{N}_0\}}{a_{1,0}}
					\end{equation*} 
				By doing so, we guarantee that the following inequality holds: 
				\begin{equation}\label{eq.NN}
						L_0\geq \rho \hspace{0.07cm}a_{1,0} >\max\{a_{r,\ell}: r, \ell\in\mathbb{N}_0\}.
					\end{equation}
					Now we observe that for all $q\geq M$, it is not possible neither to have that $\rho^{k}\hspace{0.05cm} a_{q+k,q}=L_0$ with $k\leq 0$. Otherwise, since $\rho>1$ we would have for $k\leq 0$ that \begin{equation*}
						L_0 > \max\{a_{q+k,q}: q\in\mathbb{N}_0, \hspace{0.05cm}k\geq -q\}\geq \rho^{k}\hspace{0.04cm} a_{q+k,q}=L_0,
					\end{equation*} which obviously is a contradiction with \eqref{eq.NN}. Finally, we will see that it is not possible to have that $\rho^{k} \hspace{0.05cm}a_{q+k,q}=L_0$, with $q\geq M$ and $k\leq m$. Indeed, if we suppose that $\rho^{k} \hspace{0.05cm}a_{q+k,q}=L_0$, with $q\geq M$ and $k\leq m$, then we get 
				\begin{equation*}
						L_0=\rho^k \hspace{0.05cm}a_{q+k,q}\leq \rho^m\hspace{0.05cm} a_{q+k,q}<\rho^m \hspace{0.05cm}a_{m,0}\leq L_0
					\end{equation*} since $\rho>1$, which obviously is a contradiction and we are done.
				\end{proof}

				\begin{remark}\label{Remark 3}
					Switching the roles of $j$ and $q$ and choosing $\rho<1$ small enough in the proof of Proposition \ref{main}, we obtain that for every $\lambda>0$ and $m\in\mathbb{N}$, there is $g\in \mathcal{S}_\omega(\mathbb{R})$ for which the supremum in $p_\lambda(g)$ is only attained in $j,q\in\mathbb{N}_0$ verifying $q-j\geq m$. In particular, $q\geq m$.  
				\end{remark}
				As far as we know, there is no literature available about whether all $3-$tuples $(j,q,\lambda)\in\mathbb{N}_0\times \mathbb{N}_0\times \mathbb{R}_+$ verify that there is $f\in \mathcal{S}_\omega(\mathbb{R})$ such that $$
				p_\lambda(f)=\sup_{x\in\mathbb{R}} |x|^q |f^{(j)}(x)|  \exp\left(-\lambda \varphi_\omega^*\left(\frac{j+q}{\lambda}\right)\right),$$
				 not even when the space $\mathcal{S}_\omega(\mathbb{R})$ is a classical Gelfand-Shilov class. Let us state the problem explicitly, for which we only have a partial answer as to which $3-$tuples hold the property described above:
				
				\begin{remark}
					Let $\omega$ be any weight function (for instance, the Gevrey weight). Given $\lambda>0$, $j,q\in\mathbb{N}_0$, is there any   $f\in\mathcal{S}_\omega(\mathbb{R})$  satisfying
					\begin{equation*}
						p_\lambda(f)=\max_{x\in\mathbb{R}} |x|^q |f^{(j)}(x)|  \exp\left(-\lambda \varphi_\omega^*\left(\frac{j+q}{\lambda}\right)\right)?
					\end{equation*}
				\end{remark}
			
			 Other useful fact is the following result:
			
				\begin{prop}\label{remarkFourier}
					Let $\omega$ be a weight function. Let $a\in\mathbb{R}\setminus\{\pm 1,0\}$, $\psi(x):=a x$ and $\Phi(x)=\frac{x}{a}$, for  $x\in\mathbb{R}$. The following conditions are equivalent.
					\begin{enumerate}
						\item The composition operator $C_\psi:\mathcal{S}_{\omega}(\mathbb{R})\to \mathcal{S}_{\omega}(\mathbb{R})$ is topologizable.
						\item The composition operator $C_\Phi:\mathcal{S}_{\omega}(\mathbb{R})\to \mathcal{S}_{\omega}(\mathbb{R})$ is topologizable.
					\end{enumerate}.
				\end{prop}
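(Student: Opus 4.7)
The plan is to exploit the Fourier invariance of $\mathcal{S}_\omega(\mathbb{R})$: the Fourier transform $\mathcal{F}$ is a topological isomorphism of $\mathcal{S}_\omega(\mathbb{R})$ onto itself, and it conjugates $C_\psi$ with $C_\Phi$ up to a scalar. Indeed, a direct change of variables $y=ax$ yields
\begin{equation*}
\mathcal{F}(C_\psi f)(\xi) = \int_{\mathbb{R}} f(ax)e^{-i\xi x}\,dx = \frac{1}{|a|}\int_{\mathbb{R}} f(y)e^{-i(\xi/a)y}\,dy = \frac{1}{|a|}(\mathcal{F}f)(\Phi(\xi)),
\end{equation*}
so that $\mathcal{F}\circ C_\psi = \frac{1}{|a|}\, C_\Phi\circ \mathcal{F}$. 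Iterating gives $\mathcal{F}\circ C_{\psi_m} = \frac{1}{|a|^m}\, C_{\Phi_m}\circ \mathcal{F}$ and therefore
\begin{equation*}
C_{\Phi_m} = |a|^m\, \mathcal{F}\circ C_{\psi_m}\circ \mathcal{F}^{-1}, \quad m\in\mathbb{N}.
\end{equation*}

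To prove (1)$\Rightarrow$(2), I would fix a continuous seminorm $p$ on $\mathcal{S}_\omega(\mathbb{R})$ and set $\tilde p:=p\circ\mathcal{F}$, which is continuous by continuity of $\mathcal{F}$. By topologizability of $C_\psi$, there exist a continuous seminorm $\tilde q$ and constants $M_m>0$ such that $\tilde p(C_{\psi_m}h)\le M_m\,\tilde q(h)$ for all $h\in\mathcal{S}_\omega(\mathbb{R})$ and $m\in\mathbb{N}$. Substituting $h=\mathcal{F}^{-1}f$ and using the intertwining relation,
\begin{equation*}
p(C_{\Phi_m}f) = |a|^m\,\tilde p(C_{\psi_m}(\mathcal{F}^{-1}f)) \le |a|^m M_m\,\tilde q(\mathcal{F}^{-1}f) = |a|^m M_m\, q(f),
\end{equation*}
where $q:=\tilde q\circ\mathcal{F}^{-1}$ is a continuous seminorm on $\mathcal{S}_\omega(\mathbb{R})$. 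Since the new constants $M_m':=|a|^m M_m$ depend only on $m$, this is exactly the topologizability of the family $\{C_{\Phi_m}\}_{m\in\mathbb{N}}$.

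The converse (2)$\Rightarrow$(1) is perfectly symmetric, as $\psi$ and $\Phi$ play interchangeable roles (noting that $|a|\neq 0$ and $|1/a|\neq 0$). No real obstacle is expected; the only point to be mindful of is that the scalar $|a|^m$ in the intertwining identity grows exponentially with $m$, but this is harmless here because the definition of topologizability permits arbitrary dependence of the constants $M_k$ on $k$. (In particular, the same argument also transfers $m$-topologizability from one operator to the other, since $|a|^m M_m\le (|a|C)^m$ whenever $M_m\le C^m$.)
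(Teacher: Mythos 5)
Your proof is correct and follows essentially the same route as the paper: both use the change-of-variables identity to obtain the intertwining relation $\mathcal{F}\circ C_{\psi_m}=\frac{1}{a^m}\,C_{\Phi_m}\circ\mathcal{F}$ together with the fact that the Fourier transform is an isomorphism of $\mathcal{S}_\omega(\mathbb{R})$ onto itself, absorbing the scalar $|a|^m$ into the $m$-dependent constants. You simply spell out the transfer of seminorms ($\tilde p=p\circ\mathcal{F}$, $q=\tilde q\circ\mathcal{F}^{-1}$) that the paper leaves implicit.
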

				\begin{proof}
					The result follows by observing that for all $b\not=0, \eta\in\mathbb{R}$ one has the following equalities: 
					\begin{equation*}
							(\mathcal{F}(f(b\hspace{0.05cm}\bullet)))(\eta) =\int_\mathbb{R} e^{-i \eta x}f(bx)dx=\frac{1}{b}\int_\mathbb{R}e^{-i\eta \frac{y}{b}}f(y)dy =\frac{1}{b} (\mathcal{F}f)\left(\frac{\eta}{b}\right),
					\end{equation*} 
					where the Fourier transform $\mathcal{F}: \mathcal{S}_{\omega}(\mathbb{R})\to \mathcal{S}_{\omega}(\mathbb{R})$ is an isomorphism onto. Accordingly, it follows that  $\mathcal{F} \circ C_{\psi_m}=(\frac{1}{a^m}C_{\Phi_m})\circ \mathcal{F}$ for all $m\in\mathbb{N}$. Putting these facts together we can easily conclude.
				\end{proof}
			
				We know that composition operators of non-trivial dilatations are not power bounded (see \cite[Proposition 3.1]{RACSAM}). Contrary to what was expected from the case of translations worked out above, composition operators of non-trivial dilatations are not topologizable on Gelfand-Shilov classes. Surprisingly enough, it depends on the possibility of finding $f\in \mathcal{S}_{\omega}(\mathbb{R})$ whose semi-norm $p_\lambda(f)$ is attained in $j,q\in\mathbb{N}_0$ verifying conditions like the one appearing in Proposition \ref{main}. The following result, whose proof is very different from the techniques used in \cite{jmaa, RACSAM}, is also unexpected from the classical Schwartz class $\mathcal{S}(\mathbb{R})$ (see \cite[Example 4.15.]{akj}).

				\begin{theorem}\label{Ctopol} Let $\omega$ be a weight function and
					$\psi(x):=a x$, for all $x\in\mathbb{R}$, with $a\not=0$. If the weight function $\omega$ satisfies condition $(\zeta)$, then the following conditions are equivalent.
					\begin{enumerate}
						\item $C_\psi\colon \mathcal{S}_\omega(\mathbb{R})\to \mathcal{S}_\omega(\mathbb{R})$ is power bounded.
						\item $C_\psi\colon \mathcal{S}_\omega(\mathbb{R})\to \mathcal{S}_\omega(\mathbb{R})$ is $m$-topologizable.
						\item  $C_\psi\colon \mathcal{S}_\omega(\mathbb{R})\to \mathcal{S}_\omega(\mathbb{R})$ is topologizable.
						\item $\{C_{\psi_m}: \mathcal{S}_{\omega}(\mathbb{R})\to \mathcal{S}(\mathbb{R}): m\in\mathbb{N}\}$ 
						is equicontinuous.
						\item $a=\pm 1$.
					\end{enumerate} 
				\end{theorem}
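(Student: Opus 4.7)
The implications $(1) \Leftrightarrow (4)$ and $(4) \Rightarrow (2) \Rightarrow (3)$ are immediate from the definitions in Section 1.2 (power boundedness of an operator is equicontinuity of its iterates, and equicontinuity implies $m$-topologizability implies topologizability). For $(5) \Rightarrow (1)$ I would invoke \cite[Proposition 3.1]{RACSAM}; alternatively, when $a = \pm 1$ the orbit $(\psi_m)_m$ takes only one or two values and $(C_{\psi_m})_m$ is trivially equicontinuous. The content of the theorem is therefore $(3) \Rightarrow (5)$, which I prove by contraposition.

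Assume $a \notin \{-1, 0, 1\}$. By Proposition \ref{remarkFourier}, topologizability of $C_\psi$ with $\psi(x) = ax$ is equivalent to that of $C_\Phi$ with $\Phi(x) = x/a$; after replacing $\psi$ by $\Phi$ if necessary, I may assume $|a| > 1$. Condition $(\zeta)$ supplies a weight sequence $M$ with $\mathcal{S}_\omega(\mathbb{R}) = \mathcal{S}_M(\mathbb{R})$ topologically, so I argue using the Komatsu-type seminorms $p_{M, h}$ of Definition \ref{GS-sequence}. A change of variables gives the scaling identities: for $g(x) = f(\rho x)$,
\[ p_{M, h}(C_{\psi_k} g) = \sup_{j, q, y}\,(\rho |a|^k)^{j - q}\,\frac{|y|^q |f^{(j)}(y)|}{h^{j + q} M_{j + q}}, \qquad p_{M, h'}(g) = \sup_{j, q, y}\,\rho^{j - q}\,\frac{|y|^q |f^{(j)}(y)|}{h'^{j + q} M_{j + q}}. \]

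Suppose toward a contradiction that $C_\psi$ is topologizable. Fixing $h > 0$, I obtain $h' \leq h$ and constants $(M_k)_k$ such that $p_{M, h}(C_{\psi_k} f) \leq M_k p_{M, h'}(f)$ for every $f$ and $k$. The key step is to produce, for each $m \in \mathbb{N}$, a function $g_m = f(\rho_m \cdot)$ whose $p_{M, h'}$-seminorm is attained at indices $(j_m, q_m)$ satisfying $j_m - q_m \geq m$ and $q_m \leq Q$, with $Q$ independent of $m$. Evaluating both scaling identities at this maximizer yields
\[ \frac{p_{M, h}(C_{\psi_k} g_m)}{p_{M, h'}(g_m)} \geq |a|^{k(j_m - q_m)}\!\left(\frac{h'}{h}\right)^{j_m + q_m} \geq \left(\frac{|a|^k h'}{h}\right)^{j_m - q_m}\!\left(\frac{h'}{h}\right)^{2 Q}. \]
Choosing $k$ large enough that $|a|^k > h/h'$, the right side tends to $+\infty$ as $m \to \infty$, contradicting the finiteness of $M_k$.

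The main obstacle is producing $g_m$ with the uniform bound $q_m \leq Q$. Proposition \ref{main} already gives $j_m - q_m \geq m$, but its proof admits a second configuration in which the maximizer can have unbounded $q$-coordinate. My fix is to take the seed $f$ compactly supported in an interval $[-R, R]$ with $R < h'$ (such $f$ exist by condition $(\beta)$). Then the coefficients $a_{j, q} := \max_y |y|^q |f^{(j)}(y)|/(h'^{j+q} M_{j+q})$ used in the proof of Proposition \ref{main} satisfy $a_{j, q} \leq R^q \|f^{(j)}\|_\infty/(h'^{j+q} M_{j+q})$, and so inherit a geometrically decaying factor $(R/h')^q$ which forces the maximizer of $\sup_{j, q} \rho^{j-q} a_{j, q}$ to occur at a uniformly bounded value of $q$. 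The remainder of the proof of Proposition \ref{main} then carries over and delivers the desired $g_m$, closing the argument.
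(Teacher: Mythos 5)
Your proposal is correct in substance and follows the same overall strategy as the paper's proof of $(3)\Rightarrow(5)$: reduce to $|a|>1$ via Proposition \ref{remarkFourier}, use condition $(\zeta)$ to work in $\mathcal{S}_M(\mathbb{R})$, and contradict the topologizability estimate by evaluating it at dilated functions $f(\rho\,\cdot)$ whose seminorm is attained at indices with $j-q$ arbitrarily large. The difference lies at the crucial last step. The paper evaluates $p_k(C_{\psi_m}f)$ at the maximizer of $p_h(f)$, converts the resulting estimate to the Komatsu scale to obtain \eqref{ast1}, and then feeds in the functions from Proposition \ref{main}, which only guarantee $j_\ell-q_\ell\ge\ell$ and $q_\ell\le j_\ell$; when the geometric factor is unfavourable (the case $A<B$ there, i.e.\ your $(h'/h)^{j+q}$ with $h'\le h$), the passage from \eqref{ast1} to \eqref{eq.NNN} relies on $q_\ell/j_\ell\le 1$ alone and, as printed, loses a factor $m$ in the coefficient of $q_\ell$, so that step is delicate. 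You saw precisely this obstruction (the maximizer's $q$-coordinate is not controlled by Proposition \ref{main}) and repaired it by taking a compactly supported seed, which forces the maximizer of $\sup_{j,q}\rho^{j-q}a_{j,q}$ to have $q$ uniformly bounded (in fact $q=0$), after which writing $j+q=(j-q)+2q$ absorbs the unfavourable factor and the contradiction for a single fixed large $k$ is immediate. So your route is the same in spirit but more robust at the point where the paper's computation is weakest; this is a genuine improvement rather than a detour. Two small touch-ups: (i) the equivalences $(1)\Leftrightarrow(4)$ and $(4)\Rightarrow(2)$ are not purely definitional, since item (4) has codomain $\mathcal{S}(\mathbb{R})$; like the paper, you should simply quote \cite[Proposition 3.1]{RACSAM} for $1\Leftrightarrow4\Leftrightarrow5$ and keep $1\Rightarrow2\Rightarrow3$ as the definitional chain. (ii) In the compact-support step the clean bound is $\rho^{j-q}a_{j,q}\le\bigl(\tfrac{R}{\rho h' M_1}\bigr)^{q}\rho^{j}a_{j,0}$, since $M_{j+q}\ge M_j M_1^{q}$ by log-convexity, so the correct smallness requirement is $R<\rho h' M_1$ rather than $R<h'$; this is harmless because $R$ may be taken small and $\rho$ is chosen large anyway, and note also that $a_{j,q}>0$ for your seed (no derivative of a nonzero compactly supported smooth function vanishes identically) and that the supremum defining $p_{M,h'}(g_m)$ is attained because its terms tend to $0$ by the Lemma \ref{lemmaBeurling}-type estimate in the $p_{M,h}$ scale.
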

				\begin{proof} 
					Clearly, $1.\Rightarrow 2.\Rightarrow 3.$ By \cite[Proposition 3.1]{RACSAM}  we have that $1.\Leftrightarrow 4.\Leftrightarrow 5.$. So, to conclude the proof,  it suffices to show that if $\psi(x)=a x$,  for  $x\in\mathbb{R}$, with $a\not=\pm 1$, then the composition operator $C_\psi: \mathcal{S}_{\omega}(\mathbb{R})\to \mathcal{S}_{\omega}(\mathbb{R})$ is not topologizable. To this end, we first observe that
					$\psi_m(x)=a^m x$, for all $x\in\mathbb{R}$ and $m\in\mathbb{R}$.  
				 \\

					Now, we assume that the composition operator $C_\psi: \mathcal{S}_{\omega}(\mathbb{R})\to \mathcal{S}_{\omega}(\mathbb{R})$ is topologizable. By Proposition \ref{remarkFourier}, we may also assume that $|a|>1$ and hence, $\frac{1}{|a|^{m q}}<1$ for all $m, q\in\mathbb{N}$. Therefore, 
					for every $k\in \mathbb{N}$ there exist $h\in \mathbb{N}$ with $h\geq k$ and a sequence of positive constants $\{C_m\}_{m\in\mathbb{N}}\subset \mathbb{R}_+$ such that 
					\begin{equation*}
						p_k(C_{\psi_m}f)\leq C_m p_h(f)
					\end{equation*} 
					for every $f\in \mathcal{S}_{\omega}(\mathbb{R})$ and $m\in\mathbb{R}$, i.e., applying  Lemma \ref{lemmaBeurling}, 
					\begin{align*}
						p_k(C_{\psi_m} f)
					&=\sup_{j,q\in\mathbb{N}_0}\sup_{x\in\mathbb{R}} |x|^q |f^{(j)}(a^m x)| |a|^{m j} \exp\left(-k\varphi^*_\omega\left(\frac{j+q}{k}\right)\right) \\
						& \leq C_m p_h(f)=C_m \hspace{0.02cm}\sup_{y\in\mathbb{R}} |y|^{\overline{q}} |f^{(\overline{j})}(y)| \exp\left(-h\varphi^*_\omega\left(\frac{\overline{j}+\overline{q}}{h}\right)\right), 
					\end{align*} 
					where $\overline{j}, \overline{q}\in\mathbb{N}_0$ depend only on $f$ and $h\in\mathbb{N}$. 
					On the other hand, we also have that
					\begin{align*}
							p_k(C_{\psi_m} f)
						&= \sup_{j,q\in\mathbb{N}_0} \sup_{y\in \mathbb{R}} |y|^q |a|^{m(j-q)} |f^{(j)}(y)|  \exp\left(-k\varphi^*_\omega\left(\frac{j+q}{k}\right)\right) 
						\\&\geq |a|^{m(\overline{j}-\overline{q})} \frac{\exp(-k\varphi^*_\omega(\frac{\overline{j}+\overline{q}}{k})) }{\exp(-h\varphi^*_\omega(\frac{\overline{j}+\overline{q}}{h})) } p_h(f),
					\end{align*}
					for every $f\in \mathcal{S}_{\omega}(\mathbb{R})$ and $m\in\mathbb{N}$. Combining the inequalities above, it follows   that
					\begin{equation}\label{ast}
						|a|^{m(\overline{j}-\overline{q})} \frac{\exp(-k\varphi^*_\omega(\frac{\overline{j}+\overline{q}}{k})) }{\exp(-h\varphi^*_\omega(\frac{\overline{j}+\overline{q}}{h})) }\leq C_m
					\end{equation} 
					for every $f\in \mathcal{S}_{\omega}(\mathbb{R})$ and $m\in\mathbb{N}$, where $\overline{j}, \overline{q}\in\mathbb{N}_0$ depend only on $f$ and $h\in\mathbb{N}$. We observe that \eqref{ast} is equivalent in turn to 
					\begin{equation*}
						m(\overline{j}-\overline{q})\log(|a|)+h\varphi^*_\omega\left(\frac{\overline{j}+\overline{q}}{h}\right)-k\varphi^*_\omega\left(\frac{\overline{j}+\overline{q}}{k}\right) \leq \log(C_m),
					\end{equation*}
					for all $m\in\mathbb{N}$, where $\overline{j}, \overline{q}\in\mathbb{N}_0$ depend only on $f$ and $h\in\mathbb{N}$, as it easy to prove. \\

					Since $\omega$ is a weight function satisfying condition $(\zeta)$, we have that $\mathcal{S}_\omega(\mathbb{R})=\mathcal{S}_M(\mathbb{R})$, \cite[Corollary 16]{BMM}, for some weight sequence $M=(M_p)_{p\in\mathbb{N}_0}$. Accordingly, using the system $\{p_{M,h}\}_{h>0}$ of norms generating the topology of  $\mathcal{S}_M(\mathbb{R})$, the inequality
					\eqref{ast} is equivalent to 
					\begin{equation*}
						|a|^{m(\overline{j}-\overline{q})} \frac{A^{\overline{j}+\overline{q}}}{B^{\overline{j}+\overline{q}}} \leq C_m 
					\end{equation*} for all $m\in\mathbb{N}$, and for some $A,B>0$.
					This is in turn equivalent to
					\begin{equation}\label{ast1}
						\left( m\log(|a|)+\log\left(\frac{A}{B}\right)\right)\overline{j}-\overline{q}\left(m\log(|a|)-\log\left(\frac{A}{B}\right)\right)\leq \log(C_m)
					\end{equation}
					for all $m\in\mathbb{N}$. 
					
					By Proposition \eqref{main}, we can construct sequences $(f_\ell)_\ell\subset \mathcal{S}_{\omega}(\mathbb{R})=\mathcal{S}_M(\mathbb{R})$, $(j_\ell)_\ell\subset \mathbb{N}_0, (q_\ell)_\ell\subset\mathbb{N}$ such that the supremum involving $p_{M,k}(f_\ell)$ is attained for $j_\ell$ and $q_\ell$ satisfying $j_\ell-q_\ell\geq \ell$ for all $\ell\in\mathbb{N}$. In particular, $\frac{q_\ell}{j_\ell}\leq 1$ for all $\ell\in\mathbb{N}$. Let us assume that $\log|a|-\log\left(\frac{A}{B}\right)>0$ (the other case is similar and easier).
					Since by \eqref{ast1} we have that
					\begin{equation*}\label{astint}
						j_{\ell}\left[\left(m\log(|a|)+\log\left(\frac{A}{B}\right)\right)-\frac{q_{\ell}}{j_{\ell}}\left(\log(|a|)-\log\left(\frac{A}{B}\right)\right)\right]\leq \log(C_m),
					\end{equation*}
					for every $\ell, m\in\mathbb{N}$,  we get that
					\begin{equation}\label{eq.NNN}
		j_{\ell}\left((m-1)\log|a|+2\log\left(\frac{A}{B}\right)\right)\leq \log(C_m)
					\end{equation}
				for every $\ell, m\in\mathbb{N}$.

					 For any fixed $m\in\mathbb{N}$ large enough so that $(m -
					 1) \log |a| + 2\log\left(\frac{A}{B}\right)>0$, letting $\ell\to\infty$ we obtain  a contradiction because the left-hand side of  \eqref{eq.NNN} obviously tends to $+\infty$ as $\ell\to \infty$ and the right-hand side of  \eqref{eq.NNN} continues to be equal to $\log(C_m)\in\mathbb{R}$.
				\end{proof}

				In the following result we will show that the composition operator $C_\psi: \mathcal{S}_{\omega}(\mathbb{R})\to \mathcal{S}_{\sigma}(\mathbb{R})$, associated with a dilation, is topologizable for   certain weight function $\sigma$  such that $\mathcal{S}_{\omega}(\mathbb{R})\subset \mathcal{S}_{\sigma}(\mathbb{R})$.
				\begin{prop}\label{dilationdelta} Let $\omega$ be a weight function and
					$\psi(x)=a x$, for $x\in\mathbb{R}$, with $a\not=0$. Then $\{C_{\psi_m}:\mathcal{S}_{\omega}(\mathbb{R})\to \mathcal{S}_{\omega(\bullet^{\frac{1}{1+\delta}})}(\mathbb{R}): m\in\mathbb{N}\}$ is topologizable, for every $\delta>0$.
				\end{prop}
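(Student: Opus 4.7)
The plan is to perform a direct change of variables in the defining seminorms of $\mathcal{S}_\sigma(\mathbb{R})$ with $\sigma=\omega(\bullet^{1/(1+\delta)})$ and then exploit the fact that this choice of $\sigma$ produces an extra factor $(1+\delta)$ inside the Young conjugate, which gives precisely the slack needed to absorb any exponential growth in $m$ via the superlinearity of $\varphi_\omega^*$. First I would record the identity
\[
\varphi_\sigma^*(s)=\varphi_\omega^*((1+\delta)s),\qquad s\geq 0,
\]
which is immediate from $\varphi_\sigma(r)=\sigma(e^r)=\omega(e^{r/(1+\delta)})=\varphi_\omega(r/(1+\delta))$. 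Since $\psi_m(x)=a^m x$ and $(C_{\psi_m}f)^{(j)}(x)=a^{mj}f^{(j)}(a^m x)$, fixing $\lambda>0$ and substituting $y=a^m x$ yields
\[
p_{\sigma,\lambda}(C_{\psi_m}f)=\sup_{y\in\mathbb{R},\,j,q\in\mathbb{N}_0}\Bigl(1+\tfrac{|y|}{|a|^m}\Bigr)^{q}|f^{(j)}(y)|\,|a|^{mj}\exp\bigl(-\lambda\varphi_\omega^*(\tfrac{(1+\delta)(j+q)}{\lambda})\bigr).
\]

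Next I would separate two cases according to whether $|a|\geq 1$ or $|a|<1$. If $|a|\geq 1$, then $(1+|y|/|a|^m)^q\leq(1+|y|)^q$ and $|a|^{mj}\leq |a|^{m(j+q)}$; if $|a|<1$, then $(1+|y|/|a|^m)^q\leq(1+|y|)^q/|a|^{mq}$ (using $|a|^m\leq 1$), and $|a|^{m(j-q)}\leq(1/|a|)^{m(j+q)}$. Setting $b:=\max(|a|,1/|a|)\geq 1$ and $s:=j+q$, in either case I obtain
\[
p_{\sigma,\lambda}(C_{\psi_m}f)\leq \sup_{y\in\mathbb{R},\,j,q\in\mathbb{N}_0}(1+|y|)^{q}|f^{(j)}(y)|\,b^{ms}\exp\bigl(-\lambda\varphi_\omega^*(\tfrac{(1+\delta)s}{\lambda})\bigr).
\]
Choosing $\mu:=\lambda/(1+\delta)$ gives $\mu\varphi_\omega^*(s/\mu)=\tfrac{\lambda}{1+\delta}\varphi_\omega^*((1+\delta)s/\lambda)$, so I would split
\[
b^{ms}\exp\bigl(-\lambda\varphi_\omega^*(\tfrac{(1+\delta)s}{\lambda})\bigr)=\Bigl[b^{ms}\exp\bigl(-\tfrac{\delta\lambda}{1+\delta}\varphi_\omega^*(\tfrac{(1+\delta)s}{\lambda})\bigr)\Bigr]\exp\bigl(-\mu\varphi_\omega^*(s/\mu)\bigr)
\]
and define $M_m$ as the supremum over $s\geq 0$ of the bracketed quantity. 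Because $\varphi_\omega^*(s)/s\to\infty$ as $s\to\infty$, for every fixed $m\in\mathbb{N}$ the bracket tends to $0$, hence $M_m<\infty$. Since $\mu$ depends only on $\lambda$ and $\delta$ (and not on $m$), the resulting estimate $p_{\sigma,\lambda}(C_{\psi_m}f)\leq M_m\,p_{\omega,\mu}(f)$ would yield topologizability.

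The main obstacle is controlling the factor $(1+|y|/|a|^m)^q$ when $|a|<1$, as it grows polynomially in the large parameter $q$; the device above resolves this by trading polynomial growth in $q$ for an extra power of $1/|a|$, producing a single exponential $b^{m(j+q)}$ that the $(1+\delta)$-slack built into the target weight absorbs via the superlinearity of $\varphi_\omega^*$. Notably, no subadditivity assumption on $\omega$ nor any lower bound on $\delta$ is required, in contrast with Theorem \ref{toppolynomialsfirst}.
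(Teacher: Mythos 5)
Your proof is correct and follows essentially the same route as the paper: both absorb the dilation factor ($|a|^{mj}$, resp.\ $b^{m(j+q)}$) into the extra $(1+\delta)$-slack of the Young conjugate, using $\varphi_\omega^*(s)/s\to\infty$ to get an $m$-dependent constant while the seminorm $p_{\omega,\mu}$ on the right stays independent of $m$. The only differences are cosmetic: the paper treats $|a|>1$ and $|a|<1$ separately and absorbs $|a|^{mj}$ via the estimate $D^jj!\leq B\exp\left(\mu\varphi_\omega^*\left(\frac{j}{\mu}\right)\right)$ keeping the same index $\lambda$ on both sides, whereas you change variables, unify the cases with $b=\max(|a|,1/|a|)$ and take $\mu=\lambda/(1+\delta)$.
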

				\begin{proof}
					%
					First,  suppose  that $|a|>1$ and fix $\lambda>0, \delta>0$.
					Since $\lim_{j\to \infty}\frac{|a|^{m j}}{j!^\delta}=0$ for every $m\in\mathbb{N}$ and that for all $D>0, \mu>0$ there is $B>0$ such that \begin{equation*}
						D^j j! \leq B \exp\left(\mu \varphi^*_\omega\left(\frac{j}{\mu}\right)\right)
					\end{equation*} 
					for all $j\in\mathbb{N}_0$, we obtain that for each $m\in\mathbb{N}$ there is $D_{\lambda,\delta,m}>0$ such that
					\begin{equation*}
						|a|^{m j} \leq D_{\lambda,\delta,m} \exp\left(\lambda \varphi^*_\omega\left(\frac{\delta j}{\lambda}\right)\right)
					\end{equation*}
					for all $j\in \mathbb{N}_0$. 
					
					In view of the inequality above, we obtain for every $f\in \mathcal{S}_{\omega}(\mathbb{R})$ that
					\begin{equation*}
						\begin{split}
							|x|^q |(f\circ \psi_m)^{(j)}(x)| &=|a^m x|^q |f^{(j)}(a^m x)| |a|^{m(j-q)} \\& \leq 
							p_{\omega,\lambda}(f) \exp\left(\lambda \varphi^*_\omega\left(\frac{j+q}{\lambda}\right)\right) |a|^{mj} \\ &\leq D_{\lambda,\delta,m} \exp\left(\lambda \varphi^*_\omega\left(\frac{(1+\delta)(j+q)}{\lambda}\right)\right)p_{\omega,\lambda}(f)
						\end{split}
					\end{equation*} 
					for all $x\in\mathbb{R}$, $q\in\mathbb{N}_0$, $j\in\mathbb{N}$ and $m\in\mathbb{N}$. Since $\lambda>0$ and $\delta>0$ are arbitrary, we have done with the case $|a|>1$. We similarly deal with the case $0<|a|<1$. 
				\end{proof}
				
				\begin{remark} We can conclude that 
					given a weight function $\sigma$, the composition operator having some dynamic property (such as topologizability) for every Gelfand-Shilov class $\mathcal{S}_{\sigma_\delta}(\mathbb{R})$, with $\delta>1$ and $\sigma_\delta(t)=\omega(t^{\frac{1}{\delta}})$, is not enough to state that such a dynamic property is also valid in the strictly smaller Gelfand-Shilov class $\mathcal{S}_{\sigma}(\mathbb{R})$.
				\end{remark}
				
				\begin{remark}
 Let $p>1$ and $\omega(t)=\left(\max\{0,\log(t)\}\right)^p$, for $t\geq 0$. It is well-known (see \cite[20. Example]{BMM}) that the weight $\omega$ does not verify condition $(\zeta)$. Since $\mathcal{S}_{\sigma_\delta}(\mathbb{R})=\mathcal{S}_{\omega}(\mathbb{R})$, for every $\delta>1$, where $\sigma_\delta(t)=\omega(t^{\frac{1}{\delta}})$, if $\psi(x)=a \hspace{0.04cm}x$, for $x\in \mathbb{R}$, with $a\not =0$, by Proposition \ref{dilationdelta}, the corresponding $C_\psi: \mathcal{S}_{\omega}(\mathbb{R})\to \mathcal{S}_{\omega}(\mathbb{R})$ is topologizable.
				\end{remark}
				
				\medskip
				\noindent
				\textbf{Acknowledgement} The authors would like to thank the Referee for her/his useful
				comments and suggestions.

				\medskip
				\noindent
				\textbf{Data Availability Statement} Not applicable
				\medskip

				\noindent
				{\textbf{Declarations }}
				
				\medskip
				\noindent
				\textbf{Conflict of interest}
				 The authors have no Conflict of interest to declare that	are relevant to the content of this article.

\end{document}